\documentclass[11pt]{article}

\hoffset=-15pt \voffset=-40pt \topmargin=17pt \textwidth=17cm
\textheight=22cm \evensidemargin=0.5cm \oddsidemargin=0.5cm
\parindent=0.5cm \parskip=0cm \footskip=50pt

\newtheorem{theorem}{Theorem}
\newtheorem{lemma}{Lemma}
\newtheorem{proof}{Proof}
\usepackage{hyperref}

\usepackage{amsmath}
\usepackage{graphicx}
\usepackage{amssymb}
\usepackage{graphicx}
\usepackage[dvipsnames,usenames]{color}
\usepackage{subfigure}
\input{epsf.tex}

\usepackage[affil-it]{authblk}
\title{On the power series approximations of a structured batch arrival two-class retrial system with weighted fair orbit queues}
\author{Ioannis Dimitriou\footnote{ idimit@math.upatras.gr}}
\affil{\small Department of Mathematics, 
University of Patras, P.O.~Box 
26500, Patras, Greece.}
\date{\small \today}
\begin{document}
\maketitle
\begin{abstract}
We provide power series approximations for a structured batch arrival single server retrial system with two infinite capacity \textit{weighted fair} orbit queues, i.e., the re-transmission rate of an orbit depends on the state of the other orbit queue. We consider both exponential and arbitrary distributed service times. In both cases we obtain power series expansions of the generating functions of the stationary joint orbit queue-length distributions, and provide a recursive approach to calculate their coefficients. We also show how to obtain the generating function of the stationary joint orbit queue-length distribution with the aid of a Riemann boundary value problem. Power series approximations are also provided for the model with two independent Poisson streams of jobs with single arrivals. Numerical illustrations are performed and show the accuracy of our approach. \\
\textbf{Keywords} Weighted-fair orbit queues, Power series approximation, Two-class retrial system, Probabilistic priority, Structured batch arrivals.
\end{abstract}

\section{Introduction}
In this paper we focus on the fundamental problem of investigating the delay performance in a two-class retrial system with constant retrial policy and adaptive re-transmission control. In particular, we consider a single server queueing system in which arriving jobs belong to $N=2$ different
types, say $P_{i}$, $P_{2}$. 

The arrival times of demands form a Poisson process with rate $\lambda$. We consider structured batch arrivals \cite{taka}, under which, at every arrival epoch, a group of jobs that may contain both types of jobs arrive.
The server can hold at most one job at a time, and if upon the arrival of a group of jobs, the server is busy, the $P_{k}$ jobs in the blocked group are routed to an infinite capacity orbit queue $k$, $k=1,2$. If an arriving group of jobs finds the server idle and contains both types of jobs, with probability $p_{k}$ a $P_{k}$ job, $k=1,2,$ occupies the server ($p_{1}+p_{2}=1$), while the rest jobs join the orbit according to their type; i.e., we employ a probabilistic priority setting.

Jobs from the orbits try to access the server according to a special state dependent constant retrial policy. In particular, if both orbits are non-empty, jobs from orbit $k$ try to access the server according to a constant retrial policy with rate $\phi_{k}\mu_{k}^{*}$, where $\phi_{1}=\xi$, $\phi_{2}=\bar{\xi}:=1-\xi$, where $0\leq\xi\leq1$. If orbit 1 (resp. orbit 2) is the only non-empty, it changes its re-transmission rate from $\xi\mu_{1}^{*}$ (resp. $\bar{\xi}\mu_{2}^{*}$) to $\mu_{1}^{*}$ (resp. $\mu_{2}^{*}$). In such a model an orbit queue is aware of the state of the other, i.e., we consider \textit{weighted-fair} orbit queues, and accordingly reconfigures its transmission parameters to improve its performance.
\paragraph{Applications}
Such a system can be found in the modelling of cooperative wireless network with adaptive control. Such a network operates as follows: There is a finite number of source users that transmit packets to a common destination node, and a finite number of relay nodes (i.e. orbit queues) that assist source users by retransmitting their blocked packets; e.g., \cite{dim2,mor,hunt,sad,pap1}. Moreover, due to the current trend towards dense networks and the spatial reuse of resources, we must take into account the interdependency among transmissions in the network planning. In such a case, the service rate of each relay node depends on the set of active relay nodes of the network (i.e., coupled relays). Such a characteristic appears also in cognitive radio \cite{sad}, and allows a node to exploit the ``idle" periods of the other node, achieving the full utilization of the shared channel. Moreover, it is clear that in communication networks \cite{sidi} a message is composed of packets (e.g., voice, data packets). We assume that the node assigns priority in a probabilistic manner.

Furthermore, our system can be also used to model bandwidth sharing of elastic flows in packet networks \cite{gui2,gui17}. weighted fair queueing protocol is certainly the most popular and the most commonly used in current packet
networks among all the
bandwidth sharing disciplines proposed so far in the technical literature.
\paragraph{Contribution and related work}
Our aim is to develop a technique based on the power series approximations of the joint probability generating functions (pgf) of the stationary orbit queue length distribution. We consider both exponentially and arbitrarily distributed service times. We distinguish the analysis since in the former case the model is described by a three-dimensional Markov process and can be seen as a Markov modulated random walk in the quarter plane. Its analysis leads to a matrix-form functional equation (i.e., a system of functional equations), and due to its \textit{special structure} we can reduce it into a scalar functional equation corresponding to one of the states of the (modulated) chain. The latter case leads to a scalar functional equation.

Moreover, it is the first time in the related literature that multi-class retrial systems with structured batch arrivals and an adaptive retransmission control is studied. Retrial systems with structured batch arrivals under the \textit{classical retrial policy} were studied in \cite{mou,mou1}.

The study of queueing systems using the theory of boundary value problems was developed in \cite{fay,fay1,bv}; see also \cite{avr,dimpeis,dim2,gui2,van} (not exhaustive list). For alternative approaches to analyze two-dimensional queueing models see \cite{ad} (compensation method), and \cite{blanc1,blanc2} (power series algorithm; PSA). Both Compensation method and PSA have been shown to be applied even in modulated random walks in quarter plane \cite{ad1,pss}. The PSA \cite{hou} is based on power series expansions of steady-state probabilities as functions of a certain parameter of the system, usually the load. Starting from the balance equations, the coefficients of the terms in the power series can be calculated iteratively. 

In our approach we generalize the method developed in \cite{walr} to develop power series approximations in a retrial system of \textit{weighted-fair} orbit queues with structured batch arrivals. In particular, for the exponentially distributed service times, we first construct a power series expansion in $\xi$ (rather than in load) for a bivariate pgf that corresponds to the set of states of an idle server, and then using the matrix functional equation we construct the power series expansion for the pgf that corresponds to the set of states of the busy server. In this direction, we develop an approach that can be applied to any related Markov-modulated random walk in the quarter plane. Moreover, it is the first time in the related literature that such an approach is applied to retrial systems. The coefficients of the power terms are iteratively calculated from a constant term, corresponding to $\xi=0$, and refers to the joint pgf of the priority retrial queue. Numerical results shown that PSA is very accurate for $\xi$ close to 0 (and by symmetry close to 1). Similar approach is applied to the case of arbitrarily distributed service times. We also provide expressions for the bivariate pgf in terms of a solution of a Riemann boundary value problem. The major advantage of the PSA approach over the method of the boundary value problems, is that the basic performance metrics are explicitly calculated by the input parameters without additional numerical effort. A detailed treatment for the model with two independent Poisson streams of jobs and single arrivals is also given.

The paper is summarized as follows. In Section \ref{main} we describe in detail the mathematical model. In sections \ref{gen}, and \ref{exp} we provide our main results and construct the power series expansions in $\xi$ for the unknown pgfs, for the case of the arbitrarily and exponentially distributed service times, respectively. Expressions for the pgf of the stationary joint orbit length distribution in terms of a solution of a Riemann boundary value problem are given in Section \ref{rbvp}. The case of the model with two independent Poisson streams and single arrivals, along with some spacial cases are treated in Section \ref{sa}. Approximations of the mean orbit lengths, using the PSA are given in Section \ref{perf}, while numerical validations are presented in Section \ref{num}. 
\section{The model}\label{main}
We consider a single server queue with batch arrivals, two infinite capacity orbit queues, and two types of
jobs, say $P_{k}$, $k=1,2$ that may arrive in the same batch. Let $M = (M_{1}, M_{2})$ be a random vector where $M_{k}$ is the number of $P_{k}$ jobs
in an arriving batch, and denote by $g_{m_{1},m_{2}}=P(M_{1}=m_{1},M_{2}=m_{2})$, where $m_{k}\geq0$ with $g_{0,0}=0$. Define further for $|z_{1}|\leq1$, $|z_{2}|\leq1$,
\begin{displaymath}
\begin{array}{c}
G(z_{1},z_{2})=\sum_{m_{1}=0}^{\infty}\sum_{m_{2}=0}^{\infty}g_{m_{1},m_{2}}z_{1}^{m_{1}}z_{2}^{m_{2}},\,\bar{g}_{k}=\mathbb{E}(M_{k}),\,k=1,2,
\end{array}
\end{displaymath}
and that the time between two succesive (batch) arrivals is exponentially
distributed with rate $\lambda$, independently of any other event in the system. The system dynamics are as follows:
\begin{enumerate}
\item If an arriving batch \textit{contains both types of jobs} and finds the server idle, a $P_{k}$ job in the batch occupies the server with probability $p_{k}$, $k=1,2$, $p_{1}+p_{2}=1$, and the remaining jobs join an infinite capacity orbit queue according to their type, i.e., we employ a probabilistic ``priority" for the primary arriving jobs. On the other, if the server is busy, all the $P_{k}$ jobs in batch, join orbit queue $k$, $k=1,2$.
\item If an arriving batch \textit{contains only one type of jobs}, say $P_{k}$, and finds the server idle, a $P_{k}$ job in the batch occupies the server and the remaining jobs join the orbit queue $k$, $k=1,2$. On the other, if the server is busy all the jobs in batch, join orbit queue $k$.
\item Jobs from the orbits try to access the server according to a state-dependent constant retrial policy. In particular, if
both orbit queues are non-empty, the orbit queue $k$ tries to re-transmit a blocked $P_{k}$ job to the server after an exponentially distributed time with
rate $\mu_{k}$. If an orbit queue is empty, then, the other orbit queue changes its re-transmission rate
from $\mu_{k}$ to $\mu_{k}^{*}$.
\item In this work we assume that $\mu_{k}=\phi_{k}\mu_{k}^{*}$, $k=1,2$, with $\phi_{1}=\xi$, $\phi_{2}=1-\xi$, where $0\leq\xi\leq1$, i.e., we consider ``weighted fair" orbit queues (WFOQ), or equivalently ``generalized processor sharing" orbit queues (GPSOQ).
\item Service times $S$ are independent and identically distributed (i.i.d.) random variables with p.d.f. $b(x)$, c.d.f. $B(x)$ and let $E(S)=\bar{b}$. 
\end{enumerate}
In the following we will distinguish the analysis for arbitrarily distributed and exponentially distributed service times. 
\section{The case of arbitrarily distributed service times}\label{gen}
Denote by $\tau_{m}$ the time of the $m$-th departure and $N_{k,m}$ the number of orbiting jobs in orbit queue $k$ just after the the time $\tau_{m}$. Define also by $A_{k,m}$, $k=1,2$ the number of type $k$ jobs that arrive during the $m$th service time, and let for $|z_{1}|\leq 1$, $|z_{2}|\leq 1$,
\begin{displaymath}
\begin{array}{rl}
\pi_{i,j}=\lim_{m\rightarrow \infty}Pr((N_{1,m},N_{2,m})=(i,j)),& \Pi(z_{1},z_{2})=\sum_{i=0}^{\infty}\sum_{j=0}^{\infty}\pi_{i,j}z_{1}^{i}z_{2}^{j},
\end{array}
\end{displaymath}
and
\begin{displaymath}
\begin{array}{rl}
P(A_{1}=k_{1},A_{2}=k_{2})=&d_{k_{1},k_{2}}=\int_{0}^{\infty}\sum_{n=1}^{\infty}e^{-\lambda t}\frac{(\lambda t)^{n}}{n!}g_{k_{1},k_{2}}^{(n)}dB(t),\\
d^{*}(z_{1},z_{2})=&\sum_{k_{1}=0}^{\infty}\sum_{k_{2}=0}^{\infty}d_{k_{1},k_{2}}z_{1}^{k_{1}}z_{2}^{k_{2}}=\beta^{*}(\lambda-\lambda G(z_{1},z_{2})),
\end{array}
\end{displaymath}
where $g_{k_{1},k_{2}}^{(n)}$ is the $n$-th convolution of the sequence $g_{k_{1},k_{2}}$. Then, it is readily follows that,
\begin{displaymath}
\begin{array}{rl}
\pi_{i,j}=&\frac{\mu_{1}^{*}\xi}{\lambda+\mu_{2}-\xi(\mu_{2}-\mu_{1}^{*})}\sum_{m_{1}=1}^{i+1}\sum_{m_{2}=1}^{j}\pi_{m_{1},m_{2}}d_{i+1-m_{1},j-m_{2}}\\&+\frac{\mu_{2}^{*}(1-\xi)}{\lambda+\mu_{2}-\xi(\mu_{2}-\mu_{1}^{*})}\sum_{m_{1}=1}^{i}\sum_{m_{2}=1}^{j+1}\pi_{m_{1},m_{2}}d_{i-m_{1},j+1-m_{2}}\\
&+\sum_{m_{1}=1}^{i}\sum_{m_{2}=1}^{j}\sum_{k_{1}=1}^{i-m_{1}+1}\sum_{k_{2}=1}^{j-m_{2}}\frac{\lambda g_{k_{1},k_{2}}p_{1}}{\lambda+\mu_{2}-\xi(\mu_{2}-\mu_{1}^{*})}\pi_{m_{1},m_{2}}d_{i-m_{1}-k_{1}+1,j-m_{2}-k_{2}}\\&+\sum_{m_{1}=1}^{i}\sum_{m_{2}=1}^{j}\sum_{k_{1}=1}^{i-m_{1}}\sum_{k_{2}=1}^{j-m_{2}+1}\frac{\lambda g_{k_{1},k_{2}}p_{2}}{\lambda+\mu_{2}-\xi(\mu_{2}-\mu_{1}^{*})}\pi_{m_{1},m_{2}}d_{i-m_{1}-k_{1},j-m_{2}-k_{2}+1}\\
&+\sum_{m_{1}=1}^{i}\sum_{m_{2}=1}^{j}\sum_{k_{1}=1}^{i-m_{1}+1}\frac{\lambda g_{k_{1},0}}{\lambda+\mu_{2}-\xi(\mu_{2}-\mu_{1}^{*})}\pi_{m_{1},m_{2}}d_{i-m_{1}-k_{1}+1,j-m_{2}-k_{2}}\\
&+\sum_{m_{1}=1}^{i}\sum_{m_{2}=1}^{j}\sum_{k_{2}=1}^{j-m_{2}+1}\frac{\lambda g_{0,k_{2}}}{\lambda+\mu_{2}-\xi(\mu_{2}-\mu_{1}^{*})}\pi_{m_{1},m_{2}}d_{i-m_{1},j-m_{2}-k_{2}+1}\\
&+\frac{\mu_{1}^{*}}{\lambda+\mu_{1}^{*}}\sum_{k_{1}=1}^{m+1}\pi_{k_{1},0}d_{m+1-k_{1},l}+\frac{\mu_{2}^{*}}{\lambda+\mu_{2}^{*}}\sum_{k_{2}=1}^{l+1}\pi_{0,k_{2}}d_{m,l+1-k_{2}}\\
&+\sum_{m_{1}=1}^{i}\sum_{k_{1}=1}^{i-m_{1}+1}\sum_{k_{2}=1}^{j}\frac{\lambda g_{k_{1},k_{2}}p_{1}}{\lambda+\mu_{1}^{*}}\pi_{m_{1},0}d_{i-m_{1}-k_{1}+1,j-k_{2}}\\
&+\sum_{m_{1}=1}^{i}\sum_{k_{1}=1}^{i-m_{1}+1}\frac{\lambda g_{k_{1},0}}{\lambda+\mu_{1}^{*}}\pi_{m_{1},0}d_{i-m_{1}-k_{1}+1,j}\\
&+\sum_{m_{1}=1}^{i}\sum_{k_{1}=1}^{i-m_{1}}\sum_{k_{2}=1}^{j+1}\frac{\lambda g_{k_{1},k_{2}}p_{2}}{\lambda+\mu_{1}^{*}}\pi_{m_{1},0}d_{i-m_{1}-k_{1},j-k_{2}+1}\\
&+\sum_{m_{1}=1}^{i}\sum_{k_{2}=1}^{j+1}\frac{\lambda g_{0,k_{2}}}{\lambda+\mu_{1}^{*}}\pi_{m_{1},0}d_{i-m_{1},j+1}\\

&+\sum_{m_{2}=1}^{j}\sum_{k_{1}=1}^{i-m_{1}+1}\sum_{k_{2}=1}^{j}\frac{\lambda g_{k_{1},k_{2}}p_{1}}{\lambda+\mu_{1}^{*}}\pi_{m_{1},0}d_{i-m_{1}-k_{1}+1,j-k_{2}}\\
&+\sum_{m_{1}=1}^{i}\sum_{k_{1}=1}^{i+1}\sum_{k_{2}=1}^{j}\frac{\lambda g_{k_{1},k_{2}}p_{1}}{\lambda+\mu_{2}^{*}}\pi_{0,m_{2}}d_{i-k_{1}+1,j-m_{2}-k_{2}}\\
&+\sum_{m_{2}=1}^{j}\sum_{k_{1}=1}^{i}\sum_{k_{2}=1}^{j-m_{2}+1}\frac{\lambda g_{k_{1},k_{2}}p_{2}}{\lambda+\mu_{2}^{*}}\pi_{0,m_{2}}d_{i-k_{1},j-m_{2}-k_{2}+1}\\
&+\sum_{m_{2}=1}^{j}\sum_{k_{1}=1}^{i+1}\frac{\lambda g_{k_{1},0}}{\lambda+\mu_{2}^{*}}\pi_{0,m_{2}}d_{i-m_{1}+1,j-m_{2}}\\
&+\sum_{m_{2}=1}^{j}\sum_{k_{2}=1}^{j-m_{2}+1}\frac{\lambda g_{0,k_{2}}}{\lambda+\mu_{2}^{*}}\pi_{0,m_{2}}d_{i,j-m_{2}-k_{2}+1}\\
&+\pi_{0,0}[\sum_{k_{1}=1}^{i+1}\sum_{k_{2}=1}^{j}\frac{\lambda g_{k_{1},k_{2}}p_{1}}{\lambda}d_{i-k_{1}+1,j-k_{2}}+\sum_{k_{1}=1}^{i+1}\frac{\lambda g_{k_{1},0}}{\lambda}d_{i-k_{1}+1,j}\\
&+\sum_{k_{1}=1}^{i}\sum_{k_{2}=1}^{j+1}\frac{\lambda g_{k_{1},k_{2}}p_{2}}{\lambda}d_{i-k_{1},j-k_{2}+1}+\sum_{k_{2}=1}^{j+1}\frac{\lambda g_{0,k_{2}}}{\lambda}d_{i,j-k_{2}+1}].
\end{array}
\end{displaymath}
By applying the generating function approach we come up with the following functional equation
\begin{equation}
\begin{array}{l}
(z_{1}z_{2}-\phi_{0}(z_{1},z_{2}))\Pi(z_{1},z_{2})=(z_{2}\phi_{1}(z_{1},z_{2})-\phi_{0}(z_{1},z_{2}))\Pi(z_{1},0)\\+(z_{1}\phi_{2}(z_{1},z_{2})-\phi_{0}(z_{1},z_{2}))\Pi(0,z_{2})\\+(\phi_{0}(z_{1},z_{2})-z_{1}\phi_{2}(z_{1},z_{2})-z_{2}\phi_{1}(z_{1},z_{2})+\frac{L(z_{1},z_{2})}{\lambda})\Pi(0,0),
\end{array}
\label{fun0}
\end{equation}
where,
\begin{equation}
\begin{array}{rl}
\phi_{0}(z_{1},z_{2})=&\frac{[\xi\mu_{1}^{*}z_{2}+\mu_{2}^{*}(1-\xi)z_{1}+L(z_{1},z_{2})]\beta^{*}(\lambda-\lambda G(z_{1},z_{2}))}{\lambda+\mu_{2}^{*}-\xi(\mu_{2}^{*}-\mu_{1}^{*})},\\
\phi_{1}(z_{1},z_{2})=&\frac{\mu_{1}^{*}+L(z_{1},z_{2})/z_{2}}{\lambda+\mu_{1}^{*}}\beta^{*}(\lambda-\lambda G(z_{1},z_{2})),\\
\phi_{2}(z_{1},z_{2})=&\frac{\mu_{2}^{*}+L(z_{1},z_{2})/z_{1}}{\lambda+\mu_{2}^{*}}\beta^{*}(\lambda-\lambda G(z_{1},z_{2})),\\
L(z_{1},z_{2})=&\lambda[(p_{1}z_{2}+p_{2}z_{1})G(z_{1},z_{2})\\&+(z_{2}-z_{1})(p_{2}G(z_{1},0)-p_{1}G(0,z_{2}))].
\end{array}
\end{equation}

We will first proceed with the investigation of the stability condition. Our model belongs to the general class of two dimensional random walks in quarter plane \cite{cohenRW}. Denote now by $\tau_{k}=\frac{d}{d_{z_{1}}}\phi_{k}(z_{1},z_{2})|_{z_{1}=z_{2}=1}$, $\nu_{k}=\frac{d}{d_{z_{2}}}\phi_{k}(z_{1},z_{2})|_{z_{1}=z_{2}=1}$, $k=0,1,2$. Following Th. 6.1, pp. 95-98, it is readily seen that in order the system to be stable, $\tau_{0}$, $\nu_{0}$ cannot be both greater than 1. For sake of clarity, assume hereon that $\tau_{0}<1$.
\begin{lemma}
If $\tau_{0}<1$, our system is stable if and only if
\begin{equation}
\begin{array}{l}
\rho:=\frac{\lambda}{\mu_{1}^{*}}[\bar{g}_{1}\bar{b}(\lambda+\mu_{1}^{*})+(\bar{g}_{1}-1+p_{2}(1-G(1,0))+p_{1}G(0,1))(1+\frac{\lambda\bar{g}_{2}\bar{b}(\mu_{1}^{*}-\mu_{2}^{*})}{\mu_{2}^{*}})]\\
+\frac{\lambda}{\mu_{2}^{*}}[\bar{g}_{2}\bar{b}(\lambda+\mu_{2}^{*})+(\bar{g}_{2}-1+p_{1}(1-G(0,1))+p_{2}G(1,0))(1+\frac{\lambda\bar{g}_{1}\bar{b}(\mu_{2}^{*}-\mu_{1}^{*})}{\mu_{1}^{*}})]<1
\end{array}
\end{equation}
\end{lemma}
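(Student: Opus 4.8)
The plan is to derive the stability of the system from the ergodicity of the Markov chain $(N_{1,m},N_{2,m})$ embedded at the departure epochs $\tau_m$, which is a random walk in the quarter plane, homogeneous in the interior and with distinct dynamics on each axis. Rewriting \eqref{fun0} as
\[
\Pi=\frac{\phi_0}{z_1z_2}\sum_{i,j\ge1}\pi_{i,j}z_1^{i}z_2^{j}+\frac{\phi_1}{z_1}\sum_{i\ge1}\pi_{i,0}z_1^{i}+\frac{\phi_2}{z_2}\sum_{j\ge1}\pi_{0,j}z_2^{j}+\frac{L}{\lambda z_1z_2}\pi_{0,0},
\]
exhibits $\phi_0$ as the one-step transition generating function in the interior, and $\phi_1$, $\phi_2$ as those on the horizontal axis $\{N_2=0\}$ (orbit $2$ empty) and the vertical axis $\{N_1=0\}$ (orbit $1$ empty). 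To invoke Th.~6.1 I first read off the three mean drift vectors: since from an interior state $(i,j)$ the next state has generating function $\phi_0 z_1^{i-1}z_2^{j-1}$, and likewise $\phi_1 z_1^{i-1}$ and $\phi_2 z_2^{j-1}$ on the two axes, the interior drift is $M=(\tau_0-1,\nu_0-1)$, the horizontal-axis drift is $M'=(\tau_1-1,\nu_1)$ and the vertical-axis drift is $M''=(\tau_2,\nu_2-1)$.

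Next I compute these drifts explicitly. Using $\phi_k(1,1)=1$, $G(1,1)=1$, $\partial_{z_1}G|_{(1,1)}=\bar g_1$, $\partial_{z_2}G|_{(1,1)}=\bar g_2$, $(\beta^*)'(0)=-\bar b$, together with $\partial_{z_1}L|_{(1,1)}=\lambda(\bar g_1+p_2(1-G(1,0))+p_1G(0,1))$ and its $z_2$ counterpart, the derivatives $\tau_k,\nu_k$ follow as first-order expressions in $\lambda,\mu_1^*,\mu_2^*,\bar g_1,\bar g_2,\bar b,p_1,p_2,G(1,0),G(0,1)$ and $\xi$; the factor $\beta^*(\lambda-\lambda G)$ is what produces the service contribution $\lambda\bar b\bar g_k$, while $\partial L$ produces exactly the brackets $\bar g_k-1+\ldots$ appearing in $\rho$.

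With the drifts in hand I apply the ergodicity criterion of Th.~6.1. The preliminary remark that $\tau_0,\nu_0$ cannot both exceed $1$ is simply that $M$ must not point strictly into the first quadrant. Under the standing assumption $\tau_0<1$ (that is $M_x<0$), two regimes remain according to the sign of $\nu_0-1$: if $\nu_0<1$ the interior drift points inward and Th.~6.1 demands the two determinant conditions $(\tau_0-1)\nu_1-(\nu_0-1)(\tau_1-1)<0$ and $(\nu_0-1)\tau_2-(\tau_0-1)(\nu_2-1)<0$ (no escape along $\{N_2=0\}$, respectively $\{N_1=0\}$); if $\nu_0\ge1$ the walk is pushed against $\{N_1=0\}$ and only the second condition survives. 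In either case the assertion is that the binding condition collapses to $\rho<1$.

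The main obstacle is this collapse. Writing each determinant over the common positive denominator $\lambda+\mu_2^*-\xi(\mu_2^*-\mu_1^*)$, I expect an $\xi$-linear numerator whose sign coincides with that of the $\xi$-free quantity $\rho-1$; the cancellation of the genuine $\xi$-dependence of $M$ is the delicate point, and it is forced by the fact that $\xi$ only alters rates while both orbits are busy, whereas stability is governed by the boundary behaviour. The orbit-relabelling symmetry $(1\leftrightarrow2,\ \xi\leftrightarrow1-\xi)$ interchanges the two determinant conditions and fixes $\rho$, which both explains why a single symmetric inequality suffices and serves as a check on the reduction; the cross terms $\tfrac{\lambda\bar g_2\bar b(\mu_1^*-\mu_2^*)}{\mu_2^*}$ and its mirror trace back to the mismatch of the full retrial rates $\mu_1^*,\mu_2^*$ acting on the two axes. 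I would finish by confirming that, when $\nu_0<1$, the first determinant condition is likewise equivalent to $\rho<1$, and by testing the reduced inequality on the degenerate limits (single-class input, exponential service) against the known loads, so that $\rho<1$ is necessary and sufficient.
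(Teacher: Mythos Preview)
Your proposal is correct and follows essentially the same route as the paper: both invoke Cohen's Theorem~6.1 for random walks in the quarter plane, compute the partial derivatives $\tau_k,\nu_k$ of the one-step generating functions $\phi_k$ at $(1,1)$, and reduce the resulting drift/determinant conditions to the single inequality $\rho<1$. The paper's proof is terser---it simply records the six drift quantities and asserts that substitution yields the claim---whereas you spell out the case split on the sign of $\nu_0-1$, the $\xi$-cancellation, and the $(1\leftrightarrow2,\ \xi\leftrightarrow1-\xi)$ symmetry; these are useful elaborations but not a different argument.
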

\begin{proof}
Following Th. 6.1, \cite{cohenRW}, if $\tau_{0}<1$, our system is stable iff
\begin{equation}
\begin{array}{l}
\tau_{1}-1-\nu_{1}\frac{1-\tau_{1}}{1-\nu_{1}}=\tau_{2}-1-\nu_{2}\frac{1-\tau_{1}}{1-\nu_{1}}<0,
\end{array}
\label{sta}
\end{equation}
where
\begin{displaymath}
\begin{array}{rl}
\tau_{0}=&\frac{\lambda[\bar{g}_{1}\bar{b}(\lambda+\mu_{2}^{*})+\bar{g}_{1}+p_{2}(1-G(1,0))+p_{1}G(0,1)]+\mu_{2}^{*}-\xi[\mu_{2}^{*}+\lambda\bar{g}_{1}\bar{b}(\mu_{2}^{*}-\mu_{1}^{*})]}{\lambda+\mu_{2}^{*}-\xi(\mu_{2}^{*}-\mu_{1}^{*})},\\
\nu_{0}=&\frac{\lambda[\bar{g}_{2}\bar{b}(\lambda+\mu_{2}^{*})+\bar{g}_{2}+p_{1}(1-G(0,1))+p_{2}G(1,0)]+\xi[\mu_{1}^{*}+\lambda\bar{g}_{2}\bar{b}(\mu_{1}^{*}-\mu_{2}^{*})]}{\lambda+\mu_{2}^{*}-\xi(\mu_{2}^{*}-\mu_{1}^{*})},\\
\tau_{1}=&\frac{\lambda[\bar{g}_{1}\bar{b}(\lambda+\mu_{1}^{*})+\bar{g}_{1}+p_{2}(1-G(1,0))+p_{1}G(0,1)]}{\lambda+\mu_{1}^{*}},\\
\nu_{1}=&\frac{\lambda[\bar{g}_{2}\bar{b}(\lambda+\mu_{1}^{*})+\bar{g}_{1}-1+p_{1}(1-G(0,1))+p_{2}G(1,0)]}{\lambda+\mu_{1}^{*}},\\
\tau_{2}=&\frac{\lambda[\bar{g}_{1}\bar{b}(\lambda+\mu_{2}^{*})+\bar{g}_{1}-1+p_{2}(1-G(1,0))+p_{1}G(0,1)]}{\lambda+\mu_{2}^{*}},\\
\nu_{2}=&\frac{\lambda[\bar{g}_{2}\bar{b}(\lambda+\mu_{2}^{*})+\bar{g}_{1}+p_{1}(1-G(0,1))+p_{2}G(1,0)]}{\lambda+\mu_{2}^{*}},
\end{array}
\end{displaymath}
By substituting the $\tau_{k}$, $\nu_{k}$, $k=0,1,2,$ in (\ref{sta}) we obtain the desired result.
\end{proof}
\subsection{Main result}
In the following, we proceed with our main result. We first need to rewrite (\ref{fun0}) in the following form:
\begin{equation}
\begin{array}{l}
R(z_{1},z_{2})\Pi(z_{1},z_{2})=A(z_{1},z_{2})\Pi(z_{1},0)+B(z_{1},z_{2})\Pi(0,z_{2})+C(z_{1},z_{2})\Pi(0,0),
\end{array}
\label{fun}
\end{equation}
where
\begin{equation}
\begin{array}{rl}
R(z_{1},z_{2})=&z_{1}z_{2}(\lambda+\mu_{2}-\xi(\mu_{2}-\mu_{1}^{*}))\\&-[\xi\mu_{1}^{*}z_{2}+\mu_{2}^{*}(1-\xi)z_{1}+L(z_{1},z_{2})]\beta^{*}(\lambda-\lambda G(z_{1},z_{2})),
\end{array}
\end{equation}
\begin{displaymath}
\begin{array}{rl}
L(z_{1},z_{2})=&\lambda\{(p_{1}z_{2}+p_{2}z_{1})G(z_{1},z_{2})+(z_{2}-z_{1})(p_{2}G(z_{1},0)-p_{1}G(0,z_{2}))\},\vspace{2mm}\\
A(z_{1},z_{2})=&(1-\xi)\frac{\beta^{*}(\lambda-\lambda G(z_{1},z_{2}))}{\lambda+\mu_{1}^{*}}\\&\times[(\mu_{2}^{*}-\mu_{1}^{*})L(z_{1},z_{2})+\lambda(\mu_{1}^{*}z_{2}-\mu_{2}^{*}z_{1})+\mu_{1}^{*}\mu_{2}^{*}(z_{2}-z_{1})],\vspace{2mm}\\
B(z_{1},z_{2})=&-\frac{\xi}{1-\xi}\frac{\lambda+\mu_{1}^{*}}{\lambda+\mu_{2}^{*}}A(z_{1},z_{2}),\vspace{2mm}\\
C(z_{1},z_{2})=&\beta^{*}(\lambda-\lambda G(z_{1},z_{2}))[L(z_{1},z_{2})(1+(\lambda+\mu_{2}^{*}-\xi(\mu_{2}^{*}-\mu_{1}^{*}))\\&\times(\frac{1}{\lambda}-\frac{1}{\lambda+\mu_{1}^{*}}-\frac{1}{\lambda+\mu_{2}^{*}}))+\mu_{2}^{*}z_{1}-\xi(\mu_{2}^{*}z_{1}-\mu_{1}^{*}z_{2})\\&-(\lambda+\mu_{2}^{*}-\xi(\mu_{2}^{*}-\mu_{1}^{*}))(\frac{\mu_{1}^{*}z_{2}}{\lambda+\mu_{1}^{*}}+\frac{\mu_{2}^{*}z_{1}}{\lambda+\mu_{2}^{*}})].
\end{array}
\end{displaymath}
The functional equation (\ref{fun}) relates $\Pi(z_{1},z_{2})$ to $\Pi(z_{1},0)$, $\Pi(0,z_{2})$, $\Pi(0,0)$. We can solve it either using the theory of boundary value problems, or by applying the power series approximation. We will start with the power series approximation and then proceed with the solution in terms of a Riemann boundary value problem.

Assume hereon that $\rho<1$. We will construct a power series expansion of the pgf $\Pi(z_{1},z_{2})$ in $\xi$, directly from (\ref{fun}). Denote hereon $\Pi(z_{1},z_{2}):=\Pi(z_{1},z_{2},\xi)$ to express the dependence of pgf on $\xi$. We will show that 
\begin{equation}
\begin{array}{c}
\Pi(z_{1},z_{2};\xi)=\sum_{m=0}^{\infty}V_{m}(z_{1},z_{2})\xi^{m}.
\end{array}
\label{psl}
\end{equation}
and we will show how we can obtain recursively the terms $V_{m}(z_{1},z_{2})$.
 The functional equation (\ref{g1}) is rearranged as
\begin{equation}
\begin{array}{l}
U(z_{1},z_{2})\Pi(z_{1},z_{2},\xi)-S(z_{1},z_{2})\Pi(z_{1},0,\xi)-T_{0}(z_{1},z_{2})\Pi(0,0,\xi)\\
=\xi[G_{0}(z_{1},z_{2})\Pi(z_{1},z_{2},\xi)-S(z_{1},z_{2})(\Pi(z_{1},0,\xi)-\frac{\lambda+\mu_{1}^{*}}{\lambda+\mu_{2}^{*}}\Pi(0,z_{2},\xi))\\+T_{1}(z_{1},z_{2})\Pi(0,0,\xi)],
\end{array}
\label{r1l}
\end{equation}
where,
\begin{displaymath}
\begin{array}{rl}
U(z_{1},z_{2})=&z_{2}(\lambda+\mu_{2}^{*})-(L(z_{1},z_{2})z_{1}^{-1}+\mu_{2}^{*})\beta^{*}(\lambda-\lambda G(z_{1},z_{2})),\\
G_{0}(z_{1},z_{2})=&\mu_{2}^{*}(z_{2}-\beta^{*}(\lambda-\lambda G(z_{1},z_{2})))-\mu_{1}^{*}z_{2}(1-\frac{\beta^{*}(\lambda-\lambda G(z_{1},z_{2}))}{z_{1}}),\\
S(z_{1},z_{2})=&\frac{\beta^{*}(\lambda-\lambda G(z_{1},z_{2}))}{\lambda+\mu_{1}^{*}}[(\mu_{2}^{*}-\mu_{1}^{*})L(z_{1},z_{2})z_{1}^{-1}+\lambda(\mu_{1}^{*}z_{2}z_{1}^{-1}-\mu_{2}^{*})\\&+\mu_{1}^{*}\mu_{2}^{*}(z_{2}z_{1}^{-1}-1)],\\
T_{0}(z_{1},z_{2})=&\frac{[(\lambda+\mu_{2}^{*})\mu_{1}^{*}z_{1}^{-1}L(z_{1},z_{2}-\lambda z_{2})]\beta^{*}(\lambda-\lambda G(z_{1},z_{2}))}{(\lambda+\mu_{1}^{*})\lambda},\\
T_{1}(z_{1},z_{2})=&\frac{[(\mu_{2}^{*}-\mu_{1}^{*})\mu_{1}^{*}z_{1}^{-1}\lambda z_{2}-L(z_{1},z_{2})]\beta^{*}(\lambda-\lambda G(z_{1},z_{2}))}{(\lambda+\mu_{1}^{*})\lambda}+\frac{\lambda+\mu_{1}^{*}}{\lambda+\mu_{2}^{*}}S(z_{1},z_{2}).
\end{array}
\end{displaymath}

Under such a setting we succeed to isolate only one boundary function in the left hand side of (\ref{r1}). The following theorem summarizes our main result.
\begin{theorem}\label{th1}
Under stability condition (\ref{sta}), the pgf $\Pi(z_{1},z_{2},\xi)$ can be written in power series expansions on $\xi$ with coefficients
\begin{equation}
\begin{array}{rl}
V_{0}(z_{1},z_{2})=&\frac{(1-\rho)[T_{0}(z_{1},z_{2})S(z_{1},Y_{0}(z_{1}))-S(z_{1},z_{2})T_{0}(z_{1},Y_{0}(z_{1}))]}{(\bar{g}_{1}+\bar{g}_{2})U(z_{1},z_{2})S(z_{1},Y_{0}(z_{1}))},\\
V_{m}(z_{1},z_{2})=&\frac{Q_{m-1}(z_{1},z_{2})}{U(z_{1},z_{2})S(z_{1},Y_{0}(z_{1}))},\,m\geq 1,
\end{array}
\label{u0a}
\end{equation}
where for $m\geq0$,
\begin{displaymath}
\begin{array}{rl}
Q_{m}(z_{1},z_{2})=&G_{0}(z_{1},z_{2})S(z_{1},Y_{0}(z_{1}))V_{m}(z_{1},z_{2})\\&-G_{0}(z_{1},Y_{0}(z_{1}))S(z_{1},z_{2})V_{m}(z_{1},Y_{0}(z_{1}))\\&+\frac{(\lambda+\mu_{1}^{*})S(z_{1},z_{2})S(z_{1},Y_{0}(z_{1}))}{\lambda+\mu_{1}^{*}}[V_{m}(0,z_{2})-V_{m}(0,Y_{0}(z_{1}))]\\&
+V_{m}(0,0)[T_{1}(z_{1},z_{2})S(z_{1},Y_{0}(z_{1}))-S(z_{1},z_{2})T_{1}(z_{1},Y_{0}(z_{1}))],
\end{array}
\end{displaymath}
with $Q_{-1}(z_{1},z_{2})=0$.
\end{theorem}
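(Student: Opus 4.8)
The plan is to treat $\xi$ as a perturbation parameter and to extract the Taylor coefficients $V_m(z_1,z_2)$ of (\ref{psl}) recursively from the rearranged equation (\ref{r1l}). The substantive analytic prerequisite is that the expansion exists: for $\xi\in[0,1]$ the transition kernel of the embedded chain at the departure epochs $\tau_m$ depends (rationally, hence analytically) on $\xi$ through the retrial weights $\phi_1=\xi$, $\phi_2=1-\xi$, and under the stability condition (\ref{sta}) the chain is positive recurrent; by analytic perturbation of the stationary vector one obtains that $\Pi(z_1,z_2,\xi)$ is analytic in $\xi$ near $\xi=0$ on the closed unit polydisk, and its validity is confirmed a posteriori by convergence of the recursion. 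Granting this, I would substitute $\Pi(z_1,z_2,\xi)=\sum_{m\ge0}V_m(z_1,z_2)\xi^m$ (and the analogous expansions of the boundary pgfs) into (\ref{r1l}) and equate coefficients of $\xi^m$. Since the left-hand side of (\ref{r1l}) carries no $\xi$ while the right-hand side carries an explicit factor $\xi$, this produces for every $m\ge0$ a functional equation of one and the same type,
\[ U(z_1,z_2)V_m(z_1,z_2)-S(z_1,z_2)V_m(z_1,0)-T_0(z_1,z_2)V_m(0,0)=W_{m-1}(z_1,z_2), \]
where $W_{-1}\equiv0$ and, for $m\ge1$, $W_{m-1}$ is the bracketed right-hand side of (\ref{r1l}) evaluated at the already-known coefficient $V_{m-1}$.

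The engine is then the kernel method applied order by order to $U$, which is exactly the $\xi=0$ part of the kernel. I would study $U(z_1,z_2)=0$ as an equation in $z_2$ for $|z_1|\le1$ and show by Rouch\'e's theorem on $|z_2|=1$ that it has a unique zero $z_2=Y_0(z_1)$ in the closed unit disk; this is the $Y_0$ of the statement. Because the boundary generating functions are analytic in the open disk and continuous up to its boundary, each $V_m(z_1,\cdot)$ may be evaluated at $z_2=Y_0(z_1)$. Doing so annihilates the $U\,V_m$ term and yields $S(z_1,Y_0(z_1))V_m(z_1,0)=-T_0(z_1,Y_0(z_1))V_m(0,0)-W_{m-1}(z_1,Y_0(z_1))$, i.e. it expresses the unknown boundary function $V_m(z_1,0)$ through the single constant $V_m(0,0)$ and the known $W_{m-1}$.

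Substituting this back into the order-$m$ equation and dividing by $U(z_1,z_2)\,S(z_1,Y_0(z_1))$ gives
\[ V_m(z_1,z_2)=\frac{V_m(0,0)\,[T_0(z_1,z_2)S(z_1,Y_0(z_1))-S(z_1,z_2)T_0(z_1,Y_0(z_1))]+[W_{m-1}(z_1,z_2)S(z_1,Y_0(z_1))-S(z_1,z_2)W_{m-1}(z_1,Y_0(z_1))]}{U(z_1,z_2)\,S(z_1,Y_0(z_1))}. \]
The decisive simplification is that the term $-S(z_1,z_2)V_{m-1}(z_1,0)$ of $W_{m-1}$ carries its $z_2$-dependence only through the common factor $S(z_1,z_2)$, so it cancels identically in the combination $W_{m-1}(z_1,z_2)S(z_1,Y_0)-S(z_1,z_2)W_{m-1}(z_1,Y_0)$. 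This is precisely the payoff of having isolated a single boundary function in (\ref{r1l}): the surviving pieces (the $G_0$ part, the $V_{m-1}(0,\cdot)$ part, and the $T_1$ part) reassemble, term by term, into $Q_{m-1}(z_1,z_2)$ as defined in the statement. Hence $V_m=\{V_m(0,0)[T_0S(z_1,Y_0)-ST_0(z_1,Y_0)]+Q_{m-1}\}/\{U\,S(z_1,Y_0)\}$.

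It remains to pin down the single constant $V_m(0,0)$ at each order, and this is where the real care lies. I would impose normalisation $\Pi(1,1,\xi)\equiv1$, i.e. $V_0(1,1)=1$ and $V_m(1,1)=0$ for $m\ge1$. Using $G(1,1)=\beta^*(0)=1$ one checks that both $U(1,1)=0$ and $S(1,1)=0$, so $(z_1,z_2)=(1,1)$ is a removable singularity of the displayed ratio; evaluating the limit by L'H\^opital turns each normalisation identity into a linear equation for $V_m(0,0)$. For $m=0$ (where $Q_{-1}=0$) this returns $V_0(0,0)=(1-\rho)/(\bar g_1+\bar g_2)$, which is the empty-system probability of the $\xi=0$ priority retrial queue and yields the stated $V_0$; for $m\ge1$ the same computation makes the $V_m(0,0)$-cofactor drop out and leaves exactly $V_m=Q_{m-1}/(U\,S(z_1,Y_0))$, as in (\ref{u0a}). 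I expect this constant-determination to be the main obstacle: one must track the coincident zeros of $U$, of $S$, and of the numerator at $(1,1)$ (and at $z_1=1$, where $Y_0(1)=1$ forces $S(z_1,Y_0(z_1))\to0$) and show that the removable-singularity and normalisation conditions are mutually consistent and fix every $V_m(0,0)$. The analytic-perturbation justification of the expansion is the conceptual prerequisite, but it is this bookkeeping at the kernel's boundary zeros that actually produces the closed forms.
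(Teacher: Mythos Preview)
Your proposal follows the same route as the paper: substitute the series (\ref{psl}) into (\ref{r1l}), equate coefficients of $\xi^m$, apply the kernel method via the Rouch\'e root $Y_0(z_1)$ of $U(z_1,\cdot)$ to eliminate $V_m(z_1,0)$, observe the cancellation of the $-S(z_1,z_2)V_{m-1}(z_1,0)$ contribution, and fix the constants from $\Pi(1,1,\xi)\equiv1$. The analyticity justification is handled in the paper by an implicit-function-theorem argument in an appendix, matching what you sketch.

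One point is inaccurately phrased. You write that for $m\ge1$ ``the $V_m(0,0)$-cofactor drops out''; it does not. That cofactor is the same for every $m$, and its limit at $(1,1)$ is the nonzero number $(\bar g_1+\bar g_2)/(1-\rho)$ you yourself use when $m=0$. What the paper concludes instead is that the normalisation $V_m(1,1)=0$ forces $V_m(0,0)=0$ for all $m\ge1$ (equivalently, $\Pi(0,0,\xi)$ is independent of $\xi$); this is why the stated formula for $V_m$, $m\ge1$, contains only the $Q_{m-1}$ term. For that conclusion one also needs the $Q_{m-1}$ contribution to have vanishing limit at $(1,1)$, which the paper asserts ``after some algebra'' without detail---so your instinct that the constant determination is where the care lies is right, but the outcome is $V_m(0,0)=0$, not a disappearing cofactor.
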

\begin{proof}
Having in mind that $\Pi(z_{1},z_{2};\xi)$ is analytic function of $\xi$ in a neighborhood of 0 (see \ref{appe}, and \cite{walr}) we will express $\Pi(z_{1},z_{2})$ in power series expansion for all $z_{1}$, $z_{2}$ in the unit disk. Using (\ref{psl}), (\ref{r1l}) and equate the corresponding powers of $\xi$ at both sides yields
\begin{equation}
\begin{array}{rl}
U(z_{1},z_{2})V_{m}(z_{1},z_{2})=&S(z_{1},z_{2})V_{m}(z_{1},0)+P_{m-1}(z_{1},z_{2})\\&+T_{0}(z_{1},z_{2})V_{m}(0,0),\,m\geq0,
\end{array}\label{zma}
\end{equation} 
where 
\begin{displaymath}
\begin{array}{rl}
P_{m}(z_{1},z_{2})=&G_{0}(z_{1},z_{2})V_{m}(z_{1},z_{2})-S(z_{1},z_{2})[V_{m}(z_{1},0)-\frac{\lambda+\mu_{1}^{*}}{\lambda+\mu_{2}^{*}} V_{m}(0,z_{2})]\\&+T_{1}(z_{1},z_{2})V_{m}(0,0),\,m\geq1,
\end{array}
\end{displaymath}
with $P_{-1}(z_{1},z_{2})=0$. A simple application of Rouch\'{e}'s theorem ensures that for $|z_{1}|\leq1$, $U(z_{1},z_{2})=0$ has a unique root, say $Y_{0}(z_{1})$ such that $|Y_{0}(z_{1})|<1$, with $Y_{0}(1)=1$. Due to the implicit function theorem $Y_{0}(z_{1})$ is an analytic function in the unit disk, and and
\begin{displaymath}
\begin{array}{l}
\frac{d}{dz_{1}}Y_{0}(z_{1})|_{z_{1}=1}=\frac{\lambda[\bar{g}_{1}-1+\bar{g}_{1}(\lambda+\mu_{2}^{*})\bar{b}+p_{2}(1-G(1,0))+p_{1}G(0,1)]}{\mu_{2}^{*}-\lambda[\bar{g}_{2}-1+\bar{g}_{2}(\lambda+\mu_{2}^{*})\bar{b}+p_{1}(1-G(0,1))+p_{2}G(1,0)]}.
\end{array}
\end{displaymath}
Since $\Pi(z_{1},z_{2})$ is analytic in the unit disk, the coefficients $V_{m}(z_{1},z_{2})$ are also analytic, and thus, the right hand side of (\ref{zma}) vanishes for $z_{2}=Y_{0}(z_{1})$, and gives
\begin{equation}
\begin{array}{c}
V_{m}(z_{1},0)=-\frac{P_{m-1}(z_{1},Y_{0}(z_{1}))+T_{0}(z_{1},z_{2})V_{m}(0,0)}{S(z_{1},Y_{0}(z_{1})}.
\end{array}
\label{fga}
\end{equation}
Using (\ref{fga}), (\ref{zma}) we obtain for $m\geq0$
\begin{equation}
\begin{array}{rl}
V_{m}(z_{1},z_{2})=&\frac{1}{U(z_{1},z_{2})S(z_{1},Y_{0}(z_{1}))}[(T_{0}(z_{1},z_{2})S(z_{1},Y_{0}(z_{1}))\\&-S(z_{1},z_{2})T_{0}(z_{1},Y_{0}(z_{1})))V_{m}(0,0)+Q_{m-1}(z_{1},z_{2})],
\end{array}
\label{lpa}
\end{equation}
with $Q_{-1}(z_{1},z_{2})=0$. We now need to obtain $V_{0}(0,0)$. This constant will be found by $\Pi(1,1;\xi)=1$. After tedious algebra, This means that $V_{0}(1,1)=1$, $V_{m}^{(0)}(1,1)=0$, $m\geq1$, and $Q_{m}(1,1)=0$, $m\geq0$. Using (\ref{lpa}) for $m=0$, and setting $z_{1}=z_{2}=1$, we arrive after some algebra in $V_{0}(0,0)=\frac{1}{\bar{g}_{1}+\bar{g}_{2}}(1-\rho)$, $V_{m}^{(0)}(0,0)=0$, $m\geq1$. With this part and using (\ref{lpa}) we obtain (\ref{u0a}). 
\end{proof}
\section{The case of exponentially distributed service times}\label{exp}
Denote by $N_{k}(t)$ the number of jobs in orbit queue $k$, and by $C(t)$ the state of the server (i.e., busy or idle) at time $t$, respectively. Then $Q(t)=\left\{(N_{1}(t),N_{2}(t),C(t));t\geq0\right\}$ is an irreducible aperiodic Continuous Time Markov chain (CTMC) with state space $E=\left\{0,1,...\right\}\times\left\{0,1,...\right\}\times\left\{0,1\right\}$. Let us define,
\begin{displaymath}
\begin{array}{rl}
p_{i,j}(n)=&\lim_{t\to\infty}P(N_{1}(t)=i,N_{2}(t)=j,C(t)=n),\,(i,j,n)\in E,\\
H^{(n)}(z_{1},z_{2})=&\sum_{i=0}^{\infty}\sum_{j=0}^{\infty}p_{i,j}(n)z_{1}^{i}z_{2}^{j},\,n=0,1,\ \left|x\right|\leq 1,\ \left|y\right|\leq 1.
\end{array}
\end{displaymath}
Then, by writing down the balance equations we obtain the following system of functional equations
\begin{equation}
\begin{array}{rl}
\alpha H^{(0)}(z_{1},z_{2})-\mu H^{(1)}(z_{1},z_{2})=&(\mu_{2}^{*}-\mu_{1}^{*})[\bar{\xi}H^{(0)}(z_{1},0)-\xi H^{(0)}(0,z_{2})]\\
&+[\bar{\xi}\mu_{1}^{*}+\xi\mu_{2}^{*}]H^{(0)}(0,0),
\end{array}
\label{e111}
\end{equation}
\begin{equation}
\begin{array}{l}
(L(z_{1},z_{2})+\xi\mu_{1}^{*}z_{2}+\bar{\xi}\mu_{2}^{*}z_{1})H^{(0)}(z_{1},z_{2})-z_{1}z_{2}(\lambda-\lambda G(z_{1},z_{2})+\mu)H^{(1)}(z_{1},z_{2})\\
=(\mu_{2}^{*}z_{1}-\mu_{1}^{*}z_{2})[\bar{\xi}H^{(0)}(z_{1},0)-\xi H^{(0)}(0,z_{2})]+[\bar{\xi}\mu_{1}^{*}z_{2}+\xi\mu_{2}^{*}z_{1}]H^{(0)}(0,0).
\end{array}
\label{e211}
\end{equation} 
where $\alpha=\lambda+\xi\mu_{1}^{*}+\bar{\xi}\mu_{2}^{*}$, $\lambda=\lambda_{1}+\lambda_{2}$. Using (\ref{e111}), (\ref{e211}) we come up with the fundamental functional equation
\begin{equation}
\begin{array}{rl}
K(x,y)H^{(0)}(z_{1},z_{2})=&K_{1}(z_{1},z_{2})H^{(0)}(z_{1},0)+K_{2}(z_{1},z_{2})H^{(0)}(0,z_{2})\\&+K_{3}(z_{1},z_{2})H^{(0)}(0,0),
\end{array}
\label{g1}
\end{equation}
where for $\widehat{\lambda}_{k}=\lambda_{k}\alpha$, $k=1,2,$
\begin{equation}
\begin{array}{rl}
K(z_{1},z_{2})=&\mu[L(z_{1},z_{2})-\lambda z_{1}z_{2}+\xi\mu_{1}^{*}z_{2}(1-z_{1})+\bar{\xi}\mu_{2}^{*}z_{1}(1-z_{2})]\\&-z_{1}z_{2}\widehat{\lambda}(1-G(z_{1},z_{2})),\\
K_{1}(z_{1},z_{2})=&\bar{\xi}\{\mu[\mu_{2}^{*}z_{1}(1-z_{2})-\mu_{1}^{*}z_{2}(1-z_{1})\\&-\lambda z_{1}z_{2}(1-G(z_{1},z_{2}))(\mu_{2}^{*}-\mu_{1}^{*})]\},\\
K_{2}(z_{1},z_{2})=&-\frac{\xi K_{1}(z_{1},z_{2})}{1-\xi},\\
K_{3}(z_{1},z_{2})=&\mu[\bar{\xi}\mu_{1}^{*}z_{2}(1-z_{1})+\xi\mu_{2}^{*}z_{1}(1-z_{2})]\\&-\lambda z_{1}z_{2}(1-G(z_{1},z_{2}))(\bar{\xi}\mu_{1}^{*}+\xi\mu_{2}^{*}).
\end{array}
\label{ui1}
\end{equation}
Similarly, equation (\ref{g1}) relates $H^{(0)}(z_{1},z_{2})$ with $H^{(0)}(z_{1},0)$, $H^{(0)}(0,z_{2})$, $H^{(0)}(0,0)$. Assume hereon that $\rho<1$. Before proceeding further, we are firstly going to obtain the probability of a busy (idle) server using (\ref{e111}) and some side conditions. 

For each $k\leq i$, $i = 0, 1,...$, we consider the vertical cut between the states
$\{N_{1} = k, C = 1\}$ and $\{N_{1} = i + 1, C = 0\}$. Then,
\begin{displaymath}
\begin{array}{rl}
\lambda\sum_{k=0}^{i}p_{k,.}(1)g_{i-k+1,.}=&\mu_{1}^{*}p_{i+1,0}(0)+\xi\mu_{1}^{*}\sum_{m=1}^{\infty}p_{i+1,m}(0)\\
=&\xi\mu_{1}^{*}p_{i+1,.}(0)+\bar{\xi}\mu_{1}^{*}p_{i+1,0}(0).
\end{array}
\end{displaymath}
Summing for all $i$, we obtain
\begin{equation}
\begin{array}{rl}
\lambda(1-G(0,1))H^{(1)}(1,1)=&\xi\mu_{1}^{*}(H^{(0)}(1,1)-H^{(0)}(0,1))-\bar{\xi}\mu_{1}^{*}(H^{(0)}(1,0)-H^{(0)}(0,0)).
\end{array}
\label{hj}
\end{equation}
Similarly,
\begin{equation}
\begin{array}{rl}
\lambda(1-G(1,0))H^{(1)}(1,1)=&\bar{\xi}\mu_{2}^{*}(H^{(0)}(1,1)-H^{(0)}(1,0))-\xi\mu_{2}^{*}(H^{(0)}(0,1)-H^{(0)}(0,0)).
\end{array}
\label{hj0}
\end{equation}
Summing (\ref{hj}), (\ref{hj0}), using (\ref{e111}), and the fact that $H^{(1)}(1,1)+H^{(0)}(1,1)=1$, we obtain
\begin{equation}
\begin{array}{lr}
H^{(1)}(1,1)=\frac{\lambda}{\mu-\lambda(1-G(0,1)-G(1,0))},&H^{(0)}(1,1)=\frac{\mu-\lambda(2-G(0,1)-G(1,0))}{\mu-\lambda(1-G(0,1)-G(1,0))}.
\end{array}\label{laap}
\end{equation}

Our next steps are as follows: First, we are going to construct a power series expansion of the pgf $H^{(0)}(z_{1},z_{2})$ in $\xi$, directly from (\ref{g1}). Second, using (\ref{e111}), we construct the corresponding power series expansion for $H^{(1)}(z_{1},z_{2})$ in terms of those of $H^{(0)}(z_{1},z_{2})$. Denote hereon $H^{(n)}(z_{1},z_{2};\xi):=H^{(0)}(z_{1},z_{2})$ to express the dependence of pgfs on $\xi$. We will now show that 
\begin{equation}
\begin{array}{c}
H^{(n)}(z_{1},z_{2};\xi)=\sum_{m=0}^{\infty}V_{m}^{(n)}(z_{1},z_{2})\xi^{m},\,n=0,1,
\end{array}
\label{ps}
\end{equation}
and we will show how we can obtain recursively the terms $V_{m}^{(n)}(x,y)$.
 The functional equation (\ref{g1}) is rearranged as
\begin{equation}
\begin{array}{l}
U_{1}(z_{1},z_{2})H^{(0)}(z_{1},z_{2};\xi)-U_{0}(z_{1},z_{2})H^{(0)}(z_{1},0;\xi)-F_{0}(z_{1},z_{2})H^{(0)}(0,0;\xi)\\
=\xi U_{0}(z_{1},z_{2})[H^{(0)}(z_{1},z_{2};\xi)-H^{(0)}(z_{1},0;\xi)-H^{(0)}(0,z_{2};\xi)+H^{(0)}(0,0;\xi)],
\end{array}
\label{r1}
\end{equation}
where, 
\begin{displaymath}
\begin{array}{rl}
U_{1}(z_{1},z_{2})=&\mu[L(z_{1},z_{2})z_{1}^{-1}-\lambda z_{2}-\mu_{2}^{*}(1-z_{2})]-z_{2}\lambda(\lambda+\mu_{2}^{*})(1-G(z_{1},z_{2})),\\
U_{0}(z_{1},z_{2})=&\mu[\mu_{2}^{*}(1-z_{2})-\mu_{1}^{*}z_{2}(z_{1}^{-1}-1)]-z_{2}\lambda(\mu_{2}^{*}-\mu_{1}^{*})(1-G(z_{1},z_{2})),\\
F_{0}(z_{1},z_{2})=&\mu_{1}^{*}z_{2}[\mu(z_{1}^{-1}-1)-\lambda(1-G(z_{1}z_{2}))].
\end{array}
\end{displaymath}
Note that (\ref{r1}) has the same structure as (\ref{r1l}). Thus, in order to construct power series expansions of $H^{(0)}(z_{1},z_{2})$, we use (\ref{lp}) and proceed analogously as we did in Theorem \ref{th1}. Then, using (\ref{e111}), we can obtain the power series expansions of $H^{(1)}(z_{1},z_{2})$. The following theorem summarizes our main result.
\begin{theorem}\label{tho}
Under stability condition (\ref{sta}) the pgfs $H^{(n)}(x,y;\xi)$ can be written in power series expansions on $\xi$ with coefficients
\begin{equation}
\begin{array}{rl}
V_{0}^{(0)}(z_{1},z_{2})=&\frac{(1-\rho)[F_{0}(z_{1},z_{2})U_{0}(z_{1},\tilde{Y}_{0}(z_{1}))-U_{0}(z_{1},z_{2})F_{0}(z_{1},\tilde{Y}_{0}(z_{1}))]}{(\bar{g}_{1}+\bar{g}_{2})U_{1}(z_{1},z_{2})U_{0}(z_{1},\tilde{Y}_{0}(z_{1}))},\\V_{m}^{(0)}(z_{1},z_{2})=&\frac{U_{0}(z_{1},z_{2})\tilde{Q}_{m-1}(z_{1},z_{2})}{U_{1}(z_{1},z_{2})},\,m\geq 1,
\end{array}\label{u0}
\end{equation}
\begin{equation}
\begin{array}{rl}
V_{0}^{(1)}(z_{1},z_{2})=&\frac{\lambda+\mu_{2}^{*}}{\mu}V_{0}^{(0)}(z_{1},z_{2})-\frac{\mu_{2}^{*}-\mu_{1}^{*}}{\mu}V_{0}^{(0)}(z_{1},0)-\frac{(1-\rho)\mu_{1}^{*}}{(\bar{g}_{1}+\bar{g}_{2})\mu},\\
V_{1}^{(1)}(z_{1},z_{2})=&\frac{\lambda+\mu_{2}^{*}}{\mu}V_{1}^{(0)}(z_{1},z_{2})-\frac{\mu_{2}^{*}-\mu_{1}^{*}}{\mu}[V_{0}^{(0)}(z_{1},z_{2})-V_{0}^{(0)}(z_{1},0)\\&-V_{0}^{(0)}(0,z_{2})+V_{1}^{(0)}(z_{1},0)+\frac{1-\rho}{(\bar{g}_{1}+\bar{g}_{2})}],\\
V_{m}^{(1)}(z_{1},z_{2})=&\frac{\lambda+\mu_{2}^{*}}{\mu}V_{m}^{(0)}(z_{1},z_{2})\\&-\frac{\mu_{2}^{*}-\mu_{1}^{*}}{\mu}[V_{m-1}^{(0)}(z_{1},z_{2})-V_{m-1}^{(0)}(z_{1},0)+V_{m}^{(0)}(z_{1},0)],\,m\geq2,
\end{array}
\label{u1}
\end{equation}
where $\tilde{Y}_{0}(z_{1})$, $|z_{1}|\leq1$ is the only zero of $U_{1}(z_{1},z_{2})$ inside the unit disk $|z_{2}|\leq1$ and $\tilde{Q}_{m}(z_{1},z_{2})=V_{m}^{(0)}(z_{1},z_{2})-V_{m}^{(0)}(z_{1},\tilde{Y}_{0}(z_{1}))-V_{m}^{(0)}(0,z_{2})+V_{m}^{(0)}(0,\tilde{Y}_{0}(z_{1}))$, $m\geq0$, with $\tilde{Q}_{-1}(z_{1},z_{2})=0$.
\end{theorem}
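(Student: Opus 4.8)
The plan is to exploit the remark that the rearranged equation (\ref{r1}) has \emph{exactly the same structure} as (\ref{r1l}): it isolates the single boundary unknown $H^{(0)}(z_1,0;\xi)$ on the left, with all of the $\xi$-coupling collected on the right. Consequently the coefficients $V_m^{(0)}(z_1,z_2)$ of (\ref{ps}) can be produced by transcribing the argument of Theorem \ref{th1} verbatim, after which the coefficients $V_m^{(1)}(z_1,z_2)$ follow from the linear relation (\ref{e111}) by matching powers of $\xi$.

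First I would establish the expansion (\ref{ps}) for $n=0$. As in Theorem \ref{th1}, analyticity of $H^{(0)}(z_1,z_2;\xi)$ in $\xi$ near the origin (appealing to the appendix and \cite{walr}) justifies substituting (\ref{ps}) into (\ref{r1}) and equating coefficients of $\xi^m$. This yields a recursion of the same shape as (\ref{zma}),
\[
U_1(z_1,z_2)V_m^{(0)}(z_1,z_2)=U_0(z_1,z_2)V_m^{(0)}(z_1,0)+\tilde{P}_{m-1}(z_1,z_2)+F_0(z_1,z_2)V_m^{(0)}(0,0),
\]
where $\tilde{P}_{m-1}$ gathers the lower-order terms coming from the right-hand side of (\ref{r1}). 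A Rouch\'{e} argument identical to the one used for $U(z_1,z_2)$ shows that for each $|z_1|\le1$ the kernel $U_1(z_1,z_2)=0$ has a unique root $\tilde{Y}_0(z_1)$ with $|\tilde{Y}_0(z_1)|<1$ and $\tilde{Y}_0(1)=1$, analytic in the unit disk. Since every $V_m^{(0)}$ is analytic there, the right-hand side of the recursion must vanish at $z_2=\tilde{Y}_0(z_1)$; solving for the boundary term $V_m^{(0)}(z_1,0)$ and substituting back yields the closed expressions (\ref{u0}).

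The remaining constant $V_0^{(0)}(0,0)$ is fixed by normalization. Because (\ref{laap}) already shows that $H^{(0)}(1,1)$ does \emph{not} depend on $\xi$, we must have $V_0^{(0)}(1,1)=H^{(0)}(1,1)$ and $V_m^{(0)}(1,1)=0$ for $m\ge1$. Setting $z_1=z_2=1$ in the $m=0$ case of (\ref{u0}) then produces a $0/0$ indeterminacy (since $U_1(1,1)=0$, as $\tilde{Y}_0(1)=1$), which I would resolve by l'H\^{o}pital's rule, using $\frac{d}{dz_1}\tilde{Y}_0(z_1)|_{z_1=1}$ exactly as the derivative of $Y_0$ is used in Theorem \ref{th1}. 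Matching this limit to the value of $H^{(0)}(1,1)$ in (\ref{laap}) gives $V_0^{(0)}(0,0)=\frac{1-\rho}{\bar{g}_1+\bar{g}_2}$ and $V_m^{(0)}(0,0)=0$ for $m\ge1$, completing (\ref{u0}).

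Finally, for $H^{(1)}$ I would solve (\ref{e111}) for $\mu H^{(1)}(z_1,z_2;\xi)$ and read off the $\xi$-expansion. Writing $\alpha=(\lambda+\mu_2^*)-\xi(\mu_2^*-\mu_1^*)$, $\bar{\xi}=1-\xi$, and $\bar{\xi}\mu_1^*+\xi\mu_2^*=\mu_1^*+\xi(\mu_2^*-\mu_1^*)$, every explicit $\xi$-factor is at most linear, so extracting the coefficient of $\xi^m$ couples only levels $m$ and $m-1$ of the $V^{(0)}$'s; using $V_m^{(0)}(0,0)=0$ for $m\ge1$ removes the boundary-constant contributions for $m\ge2$ and produces the three cases in (\ref{u1}). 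I expect the main obstacle to be the Rouch\'{e}/analyticity step that pins down $V_m^{(0)}(z_1,0)$ — one must confirm that $U_1(z_1,z_2)=0$ has exactly one zero inside the unit disk for every $|z_1|\le1$ and that the right-hand side of the recursion is analytic up to the boundary; the derivation of the $H^{(1)}$ coefficients is then purely a matter of careful bookkeeping of the $\xi$-convolutions.
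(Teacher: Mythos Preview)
Your proposal is correct and follows essentially the same route as the paper: substitute the power series (\ref{ps}) into the rearranged functional equation (\ref{r1}), equate powers of $\xi$, kill the boundary unknown $V_m^{(0)}(z_1,0)$ via the unique Rouch\'{e} root $\tilde{Y}_0(z_1)$ of $U_1(z_1,\cdot)$, fix the free constant by the $\xi$-independent normalization $H^{(0)}(1,1)$ from (\ref{laap}), and then read off the $V_m^{(1)}$'s from (\ref{e111}). Your explicit mention of the l'H\^{o}pital step at $z_1=z_2=1$ and of why $H^{(0)}(1,1)$ is $\xi$-free are details the paper leaves implicit, but the argument is otherwise the same.
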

\begin{proof}
The proof follows the lines of Theorem \ref{th1}. Note also that now $H^{(0)}(z_{1},z_{2};\xi)$ is analytic function of $\xi$ in a neighborhood of 0 (see \ref{appe}). We firstly express $H^{(0)}(z_{1},z_{2})$ in power series expansion using (\ref{ps}), (\ref{r1}) and equate the corresponding powers of $\xi$ at both sides. Then, by showing using Rouche's theorem that $U_{1}(z_{1},z_{2})=0$ has a unique root, say $Y_{0}(x)$ such that $|Y_{0}(x)|<1$., we obtain after some algebra
\begin{equation}
\begin{array}{c}
V_{m}^{(0)}(z_{1},0)=-\frac{F_{0}(z_{1},Y_{0}(z_{1}))}{U_{0}(z_{1},Y_{0}(z_{2}))}V_{m}^{(0)}(0,0)-P_{m-1}(z_{1},Y_{0}(z_{2})).
\end{array}
\label{fg}
\end{equation}
where $P_{m}(z_{1},z_{2})=V_{m}^{(0)}(z_{1},z_{2})-V_{m}^{(0)}(z_{1},0)-V_{m}^{(0)}(0,z_{2})+V_{m}^{(0)}(0,0)$, $m\geq1$, and $P_{-1}(z_{1},z_{2})=0$. 

Using (\ref{fg}), we obtain for $m\geq 0$,
\begin{equation}
\begin{array}{rl}
V_{m}^{(0)}(z_{1},z_{2})=&\frac{1}{U_{1}(z_{1},z_{2})}[\frac{F_{0}(z_{1},z_{2})U_{0}(z_{1},\tilde{Y}_{0}(z_{1}))-U_{0}(z_{1},z_{2})F_{0}(z_{1},\tilde{Y}_{0}(z_{1}))}{U_{0}(z_{1},\tilde{Y}_{0}(z_{1}))}V_{m}^{(0)}(0,0)\\&+U_{0}(z_{1},z_{2})Q_{m-1}(z_{1},z_{2})].
\end{array}
\label{lap}
\end{equation}

We now need to obtain $V_{0}^{(0)}(0,0)$. This constant will be found by using the fact that $H^{(0)}(1,1;\xi)=\frac{\mu-\lambda(2-G(0,1)-G(1,0))}{\mu-\lambda(1-G(0,1)-G(1,0))}$. This means that $V_{0}^{(0)}(1,1)=\frac{\mu-\lambda(2-G(0,1)-G(1,0))}{\mu-\lambda(1-G(0,1)-G(1,0))}$, $V_{m}^{(0)}(1,1)=0$, $m\geq1$, and $\tilde{Q}_{m}(1,1)=0$, $m\geq0$. Using (\ref{lap}) for $m=0$, and setting $z_{1}=z_{2}=1$, we arrive after some algebra in $V_{0}^{(0)}(0,0)=1-\rho$, $V_{m}^{(0)}(0,0)=0$, $m\geq1$. With this part and using (\ref{lap}) we obtain (\ref{u0}). Now substitute (\ref{ps}) in (\ref{e111}) to obtain,
\begin{displaymath}
\begin{array}{l}
\sum_{m=0}^{\infty}V_{m}^{(1)}(z_{1},z_{2})\xi^{m}=\frac{\alpha}{\mu}\sum_{m=0}^{\infty}V_{m}^{(0)}(z_{1},z_{2})\xi^{m}-\frac{\mu_{2}^{*}-\mu_{1}^{*}}{\mu}\sum_{m=0}^{\infty}V_{m}^{(0)}(z_{1},0)\xi^{m}\\
+\frac{\mu_{2}^{*}-\mu_{1}^{*}}{\mu}\sum_{m=0}^{\infty}[V_{m}^{(0)}(z_{1},0)+V_{m}^{(0)}(0,z_{2})]\xi^{m+1}-\frac{(1-\rho)\mu_{1}^{*}}{(\bar{g}_{1}+\bar{g}_{2})\mu}-\frac{(\mu_{2}^{*}-\mu_{1}^{*})(1-\rho)\xi}{(\bar{g}_{1}+\bar{g}_{2})\mu}.
\end{array}
\end{displaymath}
Equate the coefficients of the corresponding powers in $\xi$ to obtain $V_{m}^{(1)}(z_{1},z_{2})$ in terms of $V_{m}^{(0)}(z_{1},z_{2})$, as given in (\ref{u1}).
\end{proof}

Hence, starting from $V_{0}^{(0)}(z_{1},z_{2})$ in (\ref{u0}), we can iteratively determined all the functions $V_{m}^{(0)}(z_{1},z_{2})$, and subsequently using (\ref{u1}), the rest functions $V_{m}^{(1)}(z_{1},z_{2})$.
\section{Reduction to a Riemann boundary value problem}\label{rbvp}
We will only focus on the case of arbitrarily distributed service times. Note that (\ref{fun0}) is rewritten as 
\begin{equation}
\begin{array}{l}
(z_{1}z_{2}-\phi_{0}(z_{1},z_{2}))\Pi(z_{1},z_{2})\\=T(z_{1},z_{2})[\bar{\xi}(\lambda+\mu_{2}^{*})\Pi(z_{1},0)-\xi(\lambda+\mu_{1}^{*})\Pi(0,z_{2})]+W(z_{1},z_{2})\Pi(0,0),
\end{array}
\label{fun00}
\end{equation}
where
\begin{displaymath}
\begin{array}{rl}
T(z_{1},z_{2})=&\frac{L(z_{1},z_{2})(\mu_{2}^{*}-\mu_{1}^{*})+\mu_{1}^{*}z_{2}(\lambda+\mu_{2}^{*})-\mu_{2}^{*}z_{1}(\lambda+\mu_{1}^{*})}{(\lambda+\xi\mu_{1}^{*}+(1-\xi)\mu_{2}^{*})(\lambda+\mu_{2}^{*})(\lambda+\mu_{1}^{*})}\beta^{*}(\lambda-\lambda G(z_{1},z_{2})),\\
W(z_{1},z_{2})=&\frac{\beta^{*}(\lambda-\lambda G(z_{1},z_{2}))\mu_{2}^{*}(L(z_{1},z_{2})-\lambda z_{1})}{\lambda(\lambda+\mu_{2}^{*})}-\bar{\xi}(\lambda+\mu_{2}^{*})T(z_{1},z_{2}).
\end{array}
\end{displaymath}
The analysis of the \textit{kernel}, $z_{1}z_{2}-\phi_{0}(z_{1},z_{2})$ is the starting point for the determination of $\Pi(z_{1},z_{2})$, which is regular for $|z_{1}|<1$, continuous for $|z_{1}|\leq 1$ for every fixed $z_{2}$ with $|z_{2}|\leq 1$; and similarly, with $z_{1}$, $z_{2}$ interchanged.

Such types of random walks in the quarter plane, in which the steps to the West, South-West
and South are at most one, are discussed in \cite{coh,cohenRW,bv}. We will briefly show how we can determine $\Pi(z_{1},z_{2})$ via a transformation to a two dimensional
boundary value problem of mathematical physics, like a Riemann or
Riemann-Hilbert problem. It is easily noted that $\phi_{0}(0,0)=0$, which means that we do not allow one-transitions to south-west. The treatment of such kind of kernel is discussed in detail in \cite{bv}, part II, Sec. 3,10-3.12.
\subsection{Analysis of the kernel}
The zeros of the kernel $z_{1}z_{2}-\phi_{0}(z_{1},z_{2})$ in the region $|z_{1}|<1$, $|z_{2}|<1$, where the transform $\Pi(z_{1},z_{2})$ is finite, yields
\begin{equation}
T(z_{1},z_{2})[\bar{\xi}(\lambda+\mu_{2}^{*})\Pi(z_{1},0)-\xi(\lambda+\mu_{1}^{*})\Pi(0,z_{2})]+W(z_{1},z_{2})\Pi(0,0)=0.
\end{equation}
We follow \cite{bv}, and consider the kernel for 
\begin{displaymath}
z_{1}=gs,\ z_{2}=gs^{-1},\,|s|=1,\,|g|\leq1.
\end{displaymath}
Consequently,
\begin{equation}
\begin{array}{rl}
g^{2}=\phi_{0}(gs,gs^{-1}).
\end{array}\label{mc}
\end{equation}
The existence of the zeros of (\ref{mc}), i.e., of the kernel of (\ref{fun00}), in the region $|s|=1$, $|g|\leq1$ can be shown with Rouch\'{e}'s theorem. In particular, since $\phi_{0}(0,0)=0$, one of these zeros is zero, and the other one say $g=g(s)$. Define,
\begin{displaymath}
\begin{array}{rl}
S_{1}:=\left\{z_{1}:\ z_{1}=g(s)s,\ |s|=1\right\},& S_{2}:=\left\{z_{2}:\ z_{2}=g(s)s^{-1},\ |s|=1\right\}.
\end{array}
\end{displaymath}A point of major concern is to show that the contours $S_1$, $S_2$ are not selfintersecting. Moreover, further investigation is needed in order to clarify the position of $0$ with respect to $S_{1}$, $S_{2}$. Therefore, we will assume hereon that $S_{1}$, $S_{2}$ are Jordan contours . It turns out that $S_{1}$, $S_{2}$ traverse twice if $s$ traverses $|s|=1$ once. Moreover, following \cite{bv}, Lemma 10.2
\begin{enumerate}
\item If $\xi\mu_{1}^{*}>\bar{\xi}\mu_{2}^{*}$, then $z_{1}=0\in S_{1}^{-}$ and $z_{2}=0\in S_{2}^{+}$,
\item If $\xi\mu_{1}^{*}=\bar{\xi}\mu_{2}^{*}$, then $z_{1}=0\in S_{1}$ and $z_{2}=0\in S_{2}$,
\item If $\xi\mu_{1}^{*}<\bar{\xi}\mu_{2}^{*}$, then $z_{1}=0\in S_{1}^{+}$ and $z_{2}=0\in S_{2}^{-}$,
\end{enumerate}
where $S_{i}^{+}$, $S_{i}^{-}$ denote the interior and exterior of the contour $S_{i}$, $i=1,2$, respectively. Without loss of generality we will assume hereon that  $\xi\mu_{1}^{*}>\bar{\xi}\mu_{2}^{*}$.

Following \cite{bv}, Section II 3.6, we have to firstly consider the following boundary value problem for the functions $z_{1}:=g(s)s$, $z_{2}:=g(s)s^{-1}$. In particular, one can show that for the curves $S_{1}$, $S_{2}$, we can construct in the $x-$plane a smooth contour $L$ and a pair of mappings $z_{1}(x),\ x\in L^{+}\cup L$, $z_{2}(x),\ x\in L^{-}\cup L$ such that:
\begin{enumerate}
\item $z_{1}(x)$ is regular and univalent for $x\in L^{+}$, continuous for $x\in L^{+}\cup L$;\\ $z_{2}(x)$ is regular and univalent for $x\in L^{-}$, continuous for $x\in L^{-}\cup L$.
\item $z_{1}(x)$ maps $L^{+}$ conformally onto $S_{1}^{+}$;\\
$z_{2}(x)$ maps $L^{-}$ conformally onto $S_{2}^{+}$.
\item $z_{1}^{+}(x)$, $z_{2}^{-}(x)$, $x\in L$ is a zero pair of the kernel equation $z_{1}z_{2}=\phi_{0}(z_{1},z_{2})$, where
\begin{displaymath}
z_{1}^{+}(x)=\lim_{y\to x,\ y\in L^{+}}z_{1}(y),\,z_{2}^{-}(x)=\lim_{y\to x,\ y\in L^{-}}z_{2}(y).
\end{displaymath}
\item $z_{1}(0)>0$, $z_{1}(1)=1$, $z_{2}(\infty)=0$, $0<d:=\lim_{|x|\to \infty}|xz_{2}(x)|<\infty$.
\end{enumerate}

We proceed with the determination of $L$, and the mappings $z_{1}(.)$, $z_{2}(.)$. Since for $x\in L$, $(z_{1}^{+}(x),z_{2}^{-}(x))$ is a zero pair of the kernel equation $z_{1}z_{2}=\phi_{0}(z_{1},z_{2})$ for $|z_{1}|\leq 1$, $|z_{2}|\leq 1$ with $z_{1}^{+}(x)\in S_{1}$, $z_{2}^{-}(x)\in S_{2}$, we may write 
\begin{equation}
\begin{array}{c}
z_{1}^{+}(x)=g(e^{i\lambda(x)})e^{i\lambda(x)},\,z_{2}^{-}(x)=g(e^{i\lambda(x)})e^{-i\lambda(x)},
\end{array}
\label{uiop}
\end{equation}
where $\lambda(.):L\to[0,\pi]$, $\lambda(1)=0$. Then it is seen that for $x\in L$:
\begin{equation}
\begin{array}{l}
\log z_{1}^{+}(x)+\log\frac{xz_{2}^{-}(x)}{d}=\log\frac{g^{2}(e^{i\lambda(x)})}{d}+\log x,\\
\log z_{1}^{+}(x)-\log\frac{xz_{2}^{-}(x)}{d}=2i\lambda(x)-\log x+\log d.
\end{array}
\label{d}
\end{equation}
The solution of the above boundary value problem is:
\begin{displaymath}
\begin{array}{rl}
z_{1}(x)=&\exp\left\{\frac{1}{2\pi i}\int_{\zeta\in L}[\log\left\{g(e^{i\lambda(\zeta)})\zeta^{1/2}\right\}]\left\{\frac{\zeta+x}{\zeta-x}-\frac{\zeta+1}{\zeta-1}\frac{d\zeta}{\zeta}\right\}\right\},\,x\in L^{+},\\
z_{2}(x)=&x^{-1}\exp\left\{-\frac{1}{2\pi i}\int_{\zeta\in L}[\log\left\{g(e^{i\lambda(\zeta)})\zeta^{1/2}\right\}]\left\{\frac{\zeta+x}{\zeta-x}-\frac{\zeta+1}{\zeta-1}\frac{d\zeta}{\zeta}\right\}\right\},\,x\in L^{-}.
\end{array}
\end{displaymath}
Applying Plemelji-Sokhotski formulas for $x\in L$ gives:
\begin{displaymath}
\begin{array}{rl}
z_{1}^{+}(x)=&g(e^{i\lambda(x)})x^{1/2}\exp\left\{\frac{1}{2\pi i}\int_{\zeta\in L}[\log\left\{g(e^{i\lambda(\zeta)})\zeta^{1/2}\right\}]\left\{\frac{\zeta+x}{\zeta-x}-\frac{\zeta+1}{\zeta-1}\frac{d\zeta}{\zeta}\right\}\right\},\\
z_{2}^{-}(x)=&g(e^{i\lambda(x)})x^{-1/2}\exp\left\{-\frac{1}{2\pi i}\int_{\zeta\in L}[\log\left\{g(e^{i\lambda(\zeta)})\zeta^{1/2}\right\}]\left\{\frac{\zeta+x}{\zeta-x}-\frac{\zeta+1}{\zeta-1}\frac{d\zeta}{\zeta}\right\}\right\}.
\end{array}
\end{displaymath}
Substituting back in (\ref{d}) gives the following relation for the determination of $L$, $\lambda(x)$, $x\in L$:
\begin{displaymath}
\begin{array}{c}
\exp\left\{i\lambda(x)\right\}=x^{1/2}\exp\left\{\frac{1}{2\pi i}\int_{\zeta\in L}[\log\left\{g(e^{i\lambda(\zeta)}\zeta^{1/2}\right\}]\left\{\frac{\zeta+x}{\zeta-x}-\frac{\zeta+1}{\zeta-1}\frac{d\zeta}{\zeta}\right\}\right\}.
\end{array}
\end{displaymath}

Since for $x\in L$, $(z_{1}^{+}(x),z_{2}^{-}(x))$ is a zero pair of the kernel, it readily follows from (\ref{fun00}) that
\begin{displaymath}
\begin{array}{r}
T(z_{1}^{+}(x),z_{2}^{-}(x))[\bar{\xi}(\lambda+\mu_{2}^{*})\Pi(z_{1}^{+}(x),0)-\xi(\lambda+\mu_{1}^{*})\Pi(0,z_{2}^{-}(x))]\vspace{2mm}\\
=(\lambda+\mu_{2}^{*})(\lambda+\mu_{1}^{*})W(z_{1}^{+}(x),z_{2}^{-}(x))\Pi(0,0).
\end{array}
\end{displaymath}
Using (\ref{uiop}) we conclude in:
\begin{equation}
\Pi(z_{1}^{+}(x),0)=\frac{\xi(\lambda+\mu_{1}^{*})}{\bar{\xi}(\lambda+\mu_{2}^{*})}\Pi(0,z_{2}^{-}(x))+J(x),
\label{rbv}
\end{equation}
where
\begin{displaymath}
J(x)=\frac{(\lambda+\mu_{1}^{*})W(g(e^{i\lambda(x)})e^{i\lambda(x)},g(e^{i\lambda(x)})e^{-i\lambda(x)})}{\bar{\xi}T(g(e^{i\lambda(x)})e^{i\lambda(x)},g(e^{i\lambda(x)})e^{-i\lambda(x)})}.
\end{displaymath}
Note that (\ref{rbv}) defines a Riemann boundary value problem: For a simply connected Jordan contour $L$ and the mappings $z_{1}(x),\ x\in L\cup L^{+}$, $z_{2}(x),\ x\in L\cup L^{-}$, find two functions such that
\begin{enumerate}
\item $\Pi(z_{1}(x),0)$ is regular for $x\in L^{+}$, continuous  for $x\in L\cup L^{+}$;\\
$\Pi(0,z_{2}(x))$ is regular for $x\in L^{-}$, continuous  for $x\in L\cup L^{-}$,
\item satisfying the boundary condition (\ref{rbv}).
\end{enumerate}

Since the $\frac{\xi(\lambda+\mu_{1}^{*})}{\bar{\xi}(\lambda+\mu_{2}^{*})}$ never vanishes, the index of out boundary value problem is zero. Moreover, $\frac{\xi(\lambda+\mu_{1}^{*})}{\bar{\xi}(\lambda+\mu_{2}^{*})}$ satisfies the Holder condition on $L$. The solution of this Riemann boundary value problem is given by:
\begin{equation}
\begin{array}{rl}
\bar{\xi}(\lambda+\mu_{2}^{*})\Pi(z_{1}(x),0)=&\frac{1}{2i\pi}\int_{\zeta\in L}J(\zeta)\frac{d\zeta}{\zeta-x}+\xi(\lambda+\mu_{1}^{*})\Pi(0,0),\,x\in L^{+},\vspace{2mm}\\
\xi(\lambda+\mu_{1}^{*})\Pi(0,z_{2}(x))=&\frac{1}{2i\pi}\int_{\zeta\in L}J(\zeta)\frac{d\zeta}{\zeta-x}+\bar{\xi}(\lambda+\mu_{2}^{*})\Pi(0,0),\,x\in L^{-}.
\end{array}\label{1ert}
\end{equation}
Applying the Plemelj-Sokhotski formulas \cite{ga} we have for $x\in L$
\begin{equation}
\begin{array}{rl}
\bar{\xi}(\lambda+\mu_{2}^{*})\Pi(z_{1}(x),0)=&\frac{1}{2i\pi}\int_{\zeta\in L}J(\zeta)\frac{d\zeta}{\zeta-x}+\xi(\lambda+\mu_{1}^{*})\Pi(0,0)+\frac{J(x)}{2},\vspace{2mm}\\
\xi(\lambda+\mu_{1}^{*})\Pi(0,z_{2}(x))=&\frac{1}{2i\pi}\int_{\zeta\in L}J(\zeta)\frac{d\zeta}{\zeta-x}+\bar{\xi}(\lambda+\mu_{2}^{*})\Pi(0,0)-\frac{J(x)}{2}.
\end{array}
\label{g0}
\end{equation}
\section{The case of single arrivals}\label{sa}
The model with two arrival flows of jobs that enter the system, and brings only one customer at a time was investigated in detail in \cite{dimpeis} (see also \cite{dim2}). In \cite{dimpeis}, for the case of exponentially distributed service times, we investigated the stationary joint distribution of the number of jobs in orbits and in service station in terms of a solution of a Riemann-Hilbert boundary value problem. For the case of arbitrarily distributed times the pgf of the number of jobs in orbits was obtained with the aid of a Riemann boundary value problem. 

Thus, in the rest of this section, assume that Two independent Poisson streams of jobs, say $S_{1}$, $S_{2}$ enter the system. The arrival rate of stream $S_i$ is $\lambda_{i}$, $i = 1, 2$. In the following, we apply the power series approximation.
\subsection{Exponential service times}\label{exp1}
In such a case, following \cite{dimpeis}, by writing down the balance equations and come up with the following system of functional equations
\begin{equation}
\begin{array}{rl}
\alpha H^{(0)}(z_{1},z_{2})-\mu H^{(1)}(z_{1},z_{2})=&(\mu_{2}^{*}-\mu_{1}^{*})[\bar{\xi}H^{(0)}(z_{1},0)-\xi H^{(0)}(0,z_{2})]\\
&+[\bar{\xi}\mu_{1}^{*}+\xi\mu_{2}^{*}]H^{(0)}(0,0),
\end{array}
\label{e0111}
\end{equation}
\begin{equation}
\begin{array}{l}
(\lambda z_{1}z_{2}+\xi\mu_{1}^{*}z_{2}+\bar{\xi}\mu_{2}^{*}z_{1})H^{(0)}(z_{1},z_{2})-z_{1}z_{2}(\lambda-\lambda_{1}z_{1}-\lambda_{2}z_{2}+\mu)H^{(1)}(z_{1},z_{2})\\
=(\mu_{2}^{*}z_{1}-\mu_{1}^{*}z_{2})[\bar{\xi}H^{(0)}(z_{1},0)-\xi H^{(0)}(0,z_{2})]+[\bar{\xi}\mu_{1}^{*}z_{2}+\xi\mu_{2}^{*}z_{1}]H^{(0)}(0,0),
\end{array}
\label{e0211}
\end{equation} 
where $\alpha=\lambda+\xi\mu_{1}^{*}+\bar{\xi}\mu_{2}^{*}$, $\lambda=\lambda_{1}+\lambda_{2}$. After some algebra by using (\ref{e0111}), (\ref{e0211}) we obtain
\begin{equation}
K(z_{1},z_{2})H^{(0)}(z_{1},z_{2})=A(z_{1},z_{2})H^{(0)}(z_{1},0)+B(z_{1},z_{2})H^{(0)}(0,z_{2})+C(z_{1},z_{2})H^{(0)}(0,0),
\label{g01}
\end{equation}
where for $\widehat{\lambda}_{k}=\lambda_{k}\alpha$, $k=1,2,$
\begin{equation}
\begin{array}{rl}
K(z_{1},z_{2})=&(1-x)z_{2}[\widehat{\lambda}_{1}z_{1}-\mu\xi\mu_{1}^{*}]+(1-z_{2})z_{1}[\widehat{\lambda}_{2}z_{2}-\mu\bar{\xi}\mu_{2}^{*}],\\
A(z_{1},z_{2})=&\bar{\xi}\{(1-z_{1})z_{2}[\lambda_{1}(\mu_{2}^{*}-\mu_{1}^{*})z_{1}+\mu\mu_{1}^{*}]\\&+(1-z_{2})z_{1}[\lambda_{2}(\mu_{2}^{*}-\mu_{1}^{*})z_{2}-\mu\mu_{2}^{*}]\},\\
B(z_{1},z_{2})=&-\frac{\xi A(z_{1},z_{2})}{1-\xi},\\
C(z_{1},z_{2})=&(1-z_{1})z_{2}\{\lambda_{1}[\bar{\xi}\mu_{1}^{*}+\xi\mu_{2}^{*}]z_{1}-\bar{\xi}\mu\mu_{1}^{*}\}\\&+(1-z_{2})z_{1}\{\lambda_{2}[\bar{\xi}\mu_{1}^{*}+\xi\mu_{2}^{*}]z_{2}-\xi\mu\mu_{2}^{*}\}.
\end{array}
\label{ui10}
\end{equation}
For such a system we showed (see Theorem 1 in \cite{dimpeis}) that the system is stable when
\begin{equation}
\begin{array}{l}
\frac{\lambda}{\mu}[\frac{\lambda_{1}+\xi\mu_{1}^{*}}{\mu_{1}}+\frac{\lambda_{1}+\bar{\xi}\mu_{2}^{*}}{\mu_{2}}]<1\Leftrightarrow\rho:=\frac{\lambda_{1}}{\mu}(\frac{\lambda+\mu_{1}^{*}}{\mu_{1}})+\frac{\lambda_{2}}{\mu}(\frac{\lambda+\mu_{2}^{*}}{\mu_{2}})<1.
\label{sta0}
\end{array}
\end{equation}

Assume hereon that $\rho<1$. As we did in Section \ref{exp}, we will first construct a power series expansion of the pgf $H^{(0)}(z_{1},z_{2})$ in $\xi$, directly from (\ref{g01}) and then, using (\ref{e0111}) we construct the corresponding power series expansion for $H^{(1)}(z_{1},z_{2})$ in terms of those of $H^{(0)}(z_{1},z_{2})$. We will show that 
\begin{equation}
\begin{array}{c}
H^{(n)}(z_{1},z_{2};\xi):=H^{(0)}(z_{1},z_{2})=\sum_{m=0}^{\infty}V_{m}^{(n)}(z_{1},z_{2})\xi^{m},\,n=0,1,
\end{array}
\label{ps0}
\end{equation}
and we will show how we can obtain recursively the terms $V_{m}^{(n)}(z_{1},z_{2})$.
Equation (\ref{g01}) is rearranged as
\begin{equation}
\begin{array}{l}
G(z_{1},z_{2})H^{(0)}(z_{1},z_{2};\xi)-G_{10}(z_{1},z_{2})H^{(0)}(z_{1},0;\xi)-G_{0,0}(z_{1},z_{2})H^{(0)}(0,0;\xi)\\
=\xi G_{10}(z_{1},z_{2})[H^{(0)}(z_{1},z_{2};\xi)-H^{(0)}(z_{1},0;\xi)-H^{(0)}(0,z_{2};\xi)+H^{(0)}(0,0;\xi)],
\end{array}
\label{r01}
\end{equation}
where,
\begin{displaymath}
\begin{array}{rl}
G(z_{1},z_{2})=&z_{2}[\mu\mu_{2}^{*}+(\lambda+\mu_{2}^{*})(\lambda-\lambda_{1}z_{1}]-\mu\mu_{2}^{*}-\lambda_{2}(\lambda+\mu_{2}^{*})z_{2}^{2},\\
G_{10}(z_{1},z_{2})=&(\mu_{2}^{*}-\mu_{1}^{*})z_{2}[\lambda(1-z_{1})+\lambda(1-z_{2})]+\mu[\mu_{2}^{*}(z_{2}-1)-\mu_{1}^{*}(1-z_{1}^{-1})],\\
G_{00}(z_{1},z_{2})=&\mu_{1}^{*}z_{2}[(\lambda_{1}z_{1}-\mu)(z_{1}^{-1}-1)+\lambda_{2}(1-z_{2})].
\end{array}
\end{displaymath}
The following theorem summarizes our main result.
\begin{theorem}
Under stability condition (\ref{sta0}) the pgfs $H^{(n)}(z_{1},z_{2};\xi)$ can be written in power series expansions on $\xi$ with coefficients
\begin{equation}
\begin{array}{rl}
V_{0}^{(0)}(z_{1},z_{2})=&\frac{(1-\rho)[G_{00}(z_{1},z_{2})G_{10}(z_{1},Y_{0}(z_{1})))-G_{10}(z_{1},z_{2})G_{00}(z_{1},Y_{0}(z_{1})))]}{G(z_{1},z_{2})G_{10}(z_{1},Y_{0}(z_{1})))},\\V_{m}^{(0)}(z_{1},z_{2})=&\frac{G_{10}(z_{1},z_{2})F_{m-1}(z_{1},z_{2})}{G(z_{1},z_{2})},\,m\geq 1,
\end{array}\label{u00}
\end{equation}
\begin{equation}
\begin{array}{rl}
V_{0}^{(1)}(z_{1},z_{2})=&\frac{\lambda+\mu_{2}^{*}}{\mu}V_{0}^{(0)}(z_{1},z_{2})-\frac{\mu_{2}^{*}-\mu_{1}^{*}}{\mu}V_{0}^{(0)}(z_{1},0)-\frac{(1-\rho)\mu_{1}^{*}}{\mu},\\
V_{1}^{(1)}(z_{1},z_{2})=&\frac{\lambda+\mu_{2}^{*}}{\mu}V_{1}^{(0)}(z_{1},z_{2})-\frac{\mu_{2}^{*}-\mu_{1}^{*}}{\mu}[V_{0}^{(0)}(z_{1},z_{2})-V_{0}^{(0)}(z_{1},0)\\&-V_{0}^{(0)}(0,z_{2})+V_{1}^{(0)}(z_{1},0)+1-\rho],\\
V_{m}^{(1)}(z_{1},z_{2})=&\frac{\lambda+\mu_{2}^{*}}{\mu}V_{m}^{(0)}(z_{1},z_{2})\\&-\frac{\mu_{2}^{*}-\mu_{1}^{*}}{\mu}[V_{m-1}^{(0)}(z_{1},z_{2})-V_{m-1}^{(0)}(z_{1},0)+V_{m}^{(0)}(z_{1},0)],\,m\geq2,
\end{array}
\label{u01}
\end{equation}
where $Y_{0}(z_{1})$, $|z_{2}|\leq1$ is the only zero of $G(z_{1},z_{2})$ inside the unit disk $|z_{2}|\leq1$ and $F_{m}(z_{1},z_{2})=V_{m}^{(0)}(z_{1},z_{2})-V_{m}^{(0)}(z_{1},Y_{0}(z_{1}))-V_{m}^{(0)}(0,z_{2})+V_{m}^{(0)}(0,Y_{0}(z_{1}))$, $m\geq0$, with $F_{-1}(z_{1},z_{2})=0$.
\end{theorem}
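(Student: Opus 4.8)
The plan is to follow the proof of Theorem~\ref{tho} almost verbatim, the only genuinely new ingredient being the analysis of the single-arrival kernel $G(z_{1},z_{2})$. The key structural observation is that the rearranged equation (\ref{r01}) isolates a single boundary function $H^{(0)}(z_{1},0;\xi)$ on its left-hand side, exactly as (\ref{r1}) does, so the machinery of Theorems~\ref{th1} and~\ref{tho} transfers directly. First I would record that $\xi\mapsto H^{(0)}(z_{1},z_{2};\xi)$ is analytic in a neighbourhood of $0$ (Appendix~\ref{appe}), which legitimises the expansion (\ref{ps0}) on the closed bidisk. Substituting (\ref{ps0}) into (\ref{r01}) and equating the coefficients of $\xi^{m}$ gives, for every $m\ge 0$, the linear functional equation
\begin{displaymath}
G(z_{1},z_{2})V_{m}^{(0)}(z_{1},z_{2})=G_{10}(z_{1},z_{2})V_{m}^{(0)}(z_{1},0)+G_{00}(z_{1},z_{2})V_{m}^{(0)}(0,0)+G_{10}(z_{1},z_{2})D_{m-1}(z_{1},z_{2}),
\end{displaymath}
where $D_{m-1}(z_{1},z_{2})=V_{m-1}^{(0)}(z_{1},z_{2})-V_{m-1}^{(0)}(z_{1},0)-V_{m-1}^{(0)}(0,z_{2})+V_{m-1}^{(0)}(0,0)$ collects the already computed lower-order coefficients, with $D_{-1}\equiv 0$; this is the single-arrival counterpart of (\ref{zma}).

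The second step is the kernel analysis. For fixed $z_{1}$ the kernel $G(z_{1},z_{2})$ is quadratic in $z_{2}$, and a Rouch\'e argument on $|z_{2}|=1$ (as in Theorems~\ref{th1},~\ref{tho}) shows that $G(z_{1},z_{2})=0$ has a unique root $Y_{0}(z_{1})$ with $|Y_{0}(z_{1})|<1$ for every $|z_{1}|\le 1$; by the implicit function theorem $Y_{0}(\cdot)$ is analytic on the unit disk and $Y_{0}(1)=1$. Because every coefficient $V_{m}^{(0)}(z_{1},z_{2})$ is analytic for $|z_{2}|\le 1$, the left-hand side of the order-$m$ equation vanishes at $z_{2}=Y_{0}(z_{1})$. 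Setting $z_{2}=Y_{0}(z_{1})$ and using $G_{10}(z_{1},Y_{0}(z_{1}))\ne 0$ solves for the single unknown $V_{m}^{(0)}(z_{1},0)$ in terms of $V_{m}^{(0)}(0,0)$ and $D_{m-1}$. Reinserting this into the order-$m$ equation and dividing by $G(z_{1},z_{2})$ yields the closed form (\ref{u00}), where one uses the identity $D_{m-1}(z_{1},z_{2})-D_{m-1}(z_{1},Y_{0}(z_{1}))=F_{m-1}(z_{1},z_{2})$ to recognise the stated inhomogeneity.

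It then remains to fix the single free constant $V_{0}^{(0)}(0,0)$ and to pass to $H^{(1)}$. The constant is determined by the value of $H^{(0)}(1,1;\xi)$ (the stationary idle-server probability), obtained from the vertical-cut balance relations together with $H^{(0)}(1,1;\xi)+H^{(1)}(1,1;\xi)=1$, exactly as in (\ref{laap}); this forces $V_{m}^{(0)}(1,1)=0$ for $m\ge 1$ and $F_{m}(1,1)=0$. Evaluating (\ref{u00}) for $m=0$ at $z_{1}=z_{2}=1$ then gives $V_{0}^{(0)}(0,0)=1-\rho$ and $V_{m}^{(0)}(0,0)=0$ for $m\ge 1$, with $\rho$ as in (\ref{sta0}). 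Finally, substituting (\ref{ps0}) into (\ref{e0111}) and matching the coefficients of $\xi^{m}$ expresses each $V_{m}^{(1)}(z_{1},z_{2})$ through $V_{m}^{(0)}$ and its predecessor, producing (\ref{u01}).

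I expect the main obstacle to be the kernel analysis rather than the bookkeeping: one must verify the Rouch\'e estimate uniformly in $z_{1}\in\{|z_{1}|\le1\}$ so that $Y_{0}(z_{1})$ is well defined, analytic and single-valued on the closed disk, and---crucially for the division step---confirm that $G_{10}(z_{1},Y_{0}(z_{1}))$ does not vanish there, since this non-vanishing is precisely what makes the recursion (\ref{u00}) well posed. Once the root $Y_{0}$ and this non-degeneracy are established, the remaining steps (equating powers of $\xi$, the normalization, and the transfer to $H^{(1)}$ via (\ref{e0111})) are routine.
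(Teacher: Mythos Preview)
Your proposal is correct and follows essentially the same approach as the paper's proof: substitute the power series into the rearranged functional equation (\ref{r01}), equate coefficients of $\xi^{m}$ to get the single-arrival analogue of (\ref{zma}), use Rouch\'{e}'s theorem to locate the unique root $Y_{0}(z_{1})$ of $G(z_{1},z_{2})$ in $|z_{2}|\le1$, eliminate $V_{m}^{(0)}(z_{1},0)$, fix $V_{0}^{(0)}(0,0)=1-\rho$ via $H^{(0)}(1,1;\xi)=1-\lambda/\mu$, and finally read off $V_{m}^{(1)}$ from (\ref{e0111}). The only differences are cosmetic: the paper carries out the Rouch\'{e} estimate explicitly (splitting $G$ into $f(z_{2})+g(z_{2})$ and comparing moduli on $|z_{2}|=1$) rather than invoking it abstractly, and it cites \cite{dimpeis} for the idle-server probability rather than rederiving it from vertical cuts; your extra caveat about the non-vanishing of $G_{10}(z_{1},Y_{0}(z_{1}))$ is a point the paper leaves implicit.
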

\begin{proof}
The proof following the lines of Theorem \ref{tho}. Remind that $H^{(0)}(z_{1},z_{2};\xi)$ is analytic function of $\xi$ in a neighborhood of 0 (see \ref{appe}). We firstly express $H^{(0)}(z_{1},z_{2})$ by power series expansion in $\xi$ for all $x$, $y$ in the unit disk. Using (\ref{ps0}), (\ref{r01}) and equate the corresponding powers of $\xi$ at both sides yields
\begin{equation}
\begin{array}{rl}
G(z_{1},z_{2})V_{m}^{(0)}(z_{1},z_{2})=&G_{10}(z_{1},z_{2})[V_{m}^{(0)}(z_{1},0)+P_{m-1}(z_{1},z_{2})]\\&+G_{00}(z_{1},z_{2})V_{m}^{(0)}(0,0),\,m\geq0,
\end{array}\label{zm0}
\end{equation} 
where $P_{m}(z_{1},z_{2})=V_{m}^{(0)}(z_{1},z_{2})-V_{m}^{(0)}(z_{1},0)-V_{m}^{(0)}(0,z_{2})+V_{m}^{(0)}(0,0)$, $m\geq1$, and $P_{-1}(z_{1},z_{2})=0$. Using Rouch\'{e}'s theorem we can show that for $|z_{1}|\leq1$, $G(z_{1},z_{2})=0$ has a unique root, say $Y_{0}(z_{2})$ such that $|Y_{0}(z_{2})|<1$. Indeed, set $f(z_{2}):=z_{2}[\mu\mu_{2}^{*}+\lambda_{2}(\lambda+\mu_{2}^{*})+\lambda_{1}(\lambda+\mu_{2}^{*})(1-z_{1})]$, and $g(z_{2}):=-\mu\mu_{2}^{*}-\lambda_{2}(\lambda+\mu_{2}^{*})z_{2}^{2}$. Then
\begin{displaymath}
\begin{array}{l}
|f(z_{2})|=|\mu\mu_{2}^{*}+\lambda_{2}(\lambda+\mu_{2}^{*})+\lambda_{1}(\lambda+\mu_{2}^{*})(1-z_{1})|\\
\geq|\mu\mu_{2}^{*}+\lambda_{2}(\lambda+\mu_{2}^{*})|\geq|\mu\mu_{2}^{*}+\lambda_{2}(\lambda+\mu_{2}^{*})z_{2}^{2}|=|g(z_{2})|.
\end{array}
\end{displaymath}
A simple application of Rouch\'{e}'s theorem states that for $|z_{2}|=1$, $|Y_{0}(z_{1})|<1$. Moreover, for $z_{1}=1$, $Y_{0}(1)=min[1,\frac{\mu\mu_{2}^{*}}{\lambda_{2}(\lambda+\mu_{2}^{*})}]=1$, due to the stability conditions. Due to the implicit function theorem $Y_{0}(z_{1})$ is an analytic function in the unit disk, and $\frac{d}{dx}Y_{0}(z_{1})|_{z_{1}=1}=\frac{\lambda_{1}(\lambda+\mu_{2}^{*})}{\mu\mu_{2}^{*}-\lambda_{2}(\lambda+\mu_{2}^{*})}$. Due to the analyticity of $H^{(0)}(z_{1},z_{2})$ in the unit disk, the coefficients $V_{m}^{(0)}(z_{1},z_{2})$ are also analytic, and thus, the right hand side of (\ref{zm0}) vanishes for $z_{2}=Y_{0}(z_{1})$, and gives
\begin{equation}
\begin{array}{c}
V_{m}^{(0)}(z_{1},0)=-\frac{G_{00}(z_{1},Y_{0}(z_{1}))}{G_{10}(z_{1},Y_{0}(z_{1}))}V_{m}^{(0)}(0,0)-P_{m-1}(z_{1},Y_{0}(z_{1})).
\end{array}
\label{fgy}
\end{equation}
Using (\ref{fgy}), (\ref{zm0}) we obtain for $m\geq 0$,
\begin{equation}
\begin{array}{rl}
V_{m}^{(0)}(z_{1},z_{2})=&\frac{1}{G(z_{1},z_{2})}[\frac{G_{00}(z_{1},z_{2})G_{10}(z_{1},Y_{0}(z_{1}))-G_{10}(z_{1},z_{2})G_{00}(z_{1},Y_{0}(z_{1}))}{G_{10}(z_{1},Y_{0}(z_{1}))}V_{m}^{(0)}(0,0)\\&+G_{10}(z_{1},z_{2})F_{m-1}(z_{1},z_{2})].
\end{array}
\label{lp}
\end{equation}
We now need to obtain $V_{0}^{(0)}(0,0)$. This constant will be found by using the fact that $H^{(0)}(1,1;\xi)=1-\frac{\lambda}{\mu}$ (see \cite{dimpeis}, Lemma 1, pp. 144-145). This means that $V_{0}^{(0)}(1,1)=1-\frac{\lambda}{\mu}$, $V_{m}^{(0)}(1,1)=0$, $m\geq1$, and $Q_{m}(1,1)=0$, $m\geq0$. Using (\ref{lp}) for $m=0$, and setting $x=y=1$, we arrive after some algebra in $V_{0}^{(0)}(0,0)=1-\rho$, $V_{m}^{(0)}(0,0)=0$, $m\geq1$. With this part and using (\ref{lp}) we obtain (\ref{u00}). Now substitute (\ref{ps0}) in (\ref{e0111}) to obtain,
\begin{displaymath}
\begin{array}{l}
\sum_{m=0}^{\infty}V_{m}^{(1)}(z_{1},z_{2})\xi^{m}=\frac{\alpha}{\mu}\sum_{m=0}^{\infty}V_{m}^{(0)}(z_{1},z_{2})\xi^{m}-\frac{\mu_{2}^{*}-\mu_{1}^{*}}{\mu}\sum_{m=0}^{\infty}V_{m}^{(0)}(z_{1},0)\xi^{m}\\
+\frac{\mu_{2}^{*}-\mu_{1}^{*}}{\mu}\sum_{m=0}^{\infty}[V_{m}^{(0)}(z_{1},0)+V_{m}^{(0)}(0,z_{2})]\xi^{m+1}-\frac{(1-\rho)\mu_{1}^{*}}{\mu}-\frac{(\mu_{2}^{*}-\mu_{1}^{*})(1-\rho)\xi}{\mu}.
\end{array}
\end{displaymath}
Equate the coefficients of the corresponding powers in $\xi$ to obtain $V_{m}^{(1)}(z_{1},z_{2})$ in terms of $V_{m}^{(0)}(z_{1},z_{2})$, as given in (\ref{u00}).
\end{proof}

\subsection{The arbitrary distributed service times}
For the case of arbitrarily distributed service times we consider the embedded Markov chain at service completion epochs, and following \cite{dimpeis}, section 6, we have,
\begin{equation}
\begin{array}{rl}
(z_{1}z_{2}-\phi_{0}(z_{1},z_{2}))\Pi(z_{1},z_{2})=&\psi_{1}(z_{1},z_{2})\Pi(z_{1},0)+\psi_{2}(z_{1},z_{2})\Pi(0,z_{2})\\&+\psi_{3}(z_{1},z_{2})\Pi(0,0),
\end{array}
\label{fund0}
\end{equation}
where,
\begin{equation}
\begin{array}{rl}
\psi_{0}(z_{1},z_{2})=&\frac{[\xi\mu_{1}^{*}z_{2}+\mu_{2}^{*}(1-\xi)z_{1}+\lambda z_{1}z_{2}]\beta^{*}(\lambda-\lambda_{1}z_{1}-\lambda z_{2})}{\lambda+\mu_{2}^{*}-\xi(\mu_{2}^{*}-\mu_{1}^{*})},\\
\psi_{1}(z_{1},z_{2})=&(1-\xi)z_{1}\tilde{S}(z_{1},z_{2}),\\
\psi_{2}(z_{1},z_{2})=&-\frac{\xi}{1-\xi}\frac{\lambda+\mu_{1}^{*}}{\lambda+\mu_{2}^{*}}\psi_{1}(z_{1},z_{2}),\\
\psi_{3}(z_{1},z_{2})=&z_{1}(\tilde{T}_{0}(z_{1},z_{2})+\xi \tilde{T}_{1}(z_{1},z_{2})),\\
\tilde{T}_{0}(z_{1},z_{2})=&\frac{(\lambda+\mu_{2}^{*})\mu_{1}^{*}z_{2}(1-z_{1}^{-1})\beta^{*}(\lambda-\lambda_{1}z_{1}-\lambda_{2}z_{2})}{\lambda+\mu_{1}^{*}},\\
\tilde{S}(z_{1},z_{2})=&\frac{\lambda z_{2}(\mu_{2}^{*}-\mu_{1}^{*})-\lambda(\mu_{2}^{*}-\mu_{1}^{*}z_{2}z_{1}^{-1})+\mu_{1}^{*}\mu_{2}^{*}(z_{2}z_{1}^{-1}-1)}{\lambda+\mu_{1}^{*}}\beta^{*}(\lambda-\lambda_{1}z_{1}-\lambda_{2}z_{2}),\\
\tilde{T}_{1}(z_{1},z_{2})=&\frac{\mu_{1}^{*}z_{2}(1-z_{1}^{-1})(\mu_{2}^{*}-\mu_{1}^{*})\beta^{*}(\lambda-\lambda_{1}z_{1}-\lambda_{2}z_{2})}{\lambda+\mu_{1}^{*}}+\frac{\lambda+\mu_{1}^{*}}{\lambda+\mu_{2}^{*}}\tilde{S}(z_{1},z_{2}).
\end{array}
\end{equation}
Using results from \cite{dimpeis}, our system is stable when
\begin{equation}
\rho=\bar{b}(\frac{\lambda_{1}(\lambda+\mu_{1}^{*})}{\mu_{1}^{*}}+\frac{\lambda_{2}(\lambda+\mu_{2}^{*})}{\mu_{2}^{*}})<1.
\label{stabb}
\end{equation}
Following the procedure developed in Section \ref{gen}, we apply the power series approximation. The following theorem summarizes the main result.
\begin{theorem}\label{the}
Under stability condition (\ref{stabb}), the pgf $\Pi(z_{1},z_{2},\xi)$ can be written in power series expansions on $\xi$ with coefficients
\begin{equation}
\begin{array}{rl}
V_{0}(z_{1},z_{2})=&\frac{(1-\rho)[\tilde{T}_{0}(z_{1},z_{2})\tilde{S}(z_{1},Y_{0}(z_{1}))-\tilde{S}(z_{1},z_{2})\tilde{T}_{0}(z_{1},Y_{0}(z_{1}))]}{\tilde{U}(z_{1},z_{2})\tilde{S}(z_{1},Y_{0}(z_{1}))},\\
V_{m}(z_{1},z_{2})=&\frac{\tilde{F}_{m-1}(z_{1},z_{2})}{\tilde{U}(z_{1},z_{2})\tilde{S}(z_{1},Y_{0}(z_{1}))},\,m\geq 1,
\end{array}
\label{u0as}
\end{equation}
where for $m\geq0$,
\begin{displaymath}
\begin{array}{rl}
\tilde{F}_{m}(z_{1},z_{2})=&\tilde{U}_{0}(z_{1},z_{2})\tilde{S}(z_{1},Y_{0}(z_{1}))V_{m}(z_{1},z_{2})\\&-\tilde{U}_{0}(z_{1},Y_{0}(z_{1}))\tilde{S}(z_{1},z_{2})V_{m}(z_{1},Y_{0}(z_{1}))\\&+\frac{(\lambda+\mu_{1}^{*})\tilde{S}(z_{1},z_{2})\tilde{S}(z_{1},Y_{0}(z_{1}))}{\lambda+\mu_{1}^{*}}[V_{m}(0,z_{2})-V_{m}(0,Y_{0}(z_{1}))]\\&
+V_{m}(0,0)\frac{\mu_{1}^{*}(\mu_{2}^{*}-\mu_{1}^{*})(z_{1}^{-1}-1)}{\lambda+\mu_{1}^{*}}[Y_{0}(z_{1})\tilde{S}(z_{1},Y_{0}(z_{1}))-z_{2}\tilde{S}(z_{1},Y_{0}(z_{1}))],
\end{array}
\end{displaymath}
with $Q_{-1}(z_{1},z_{2})=0$ and,
\begin{displaymath}
\begin{array}{rl}
\tilde{U}(z_{1},z_{2})=&z_{2}(\lambda+\mu_{2}^{*})-(\mu_{2}^{*}+\lambda z_{2})\beta^{*}(\lambda-\lambda_{1}z_{1}-\lambda_{2}z_{2}),\\
\tilde{U}_{0}(z_{1},z_{2})=&\mu_{2}^{*}(z_{2}-\beta^{*}(\lambda-\lambda_{1}z_{1}-\lambda_{2}z_{2}))-\mu_{1}^{*}z_{2}(1-\frac{\beta^{*}(\lambda-\lambda_{1}z_{1}-\lambda_{2}z_{2})}{z_{1}}),
\end{array}
\end{displaymath}
and $Y_{0}(z_{1})$, is the only root of $\tilde{U}(z_{1},z_{2})=0$ for $|z_{1}|=1$, $|z_{2}|\leq1$.
\end{theorem}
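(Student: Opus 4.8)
The plan is to replay, essentially verbatim, the argument of Theorem \ref{th1}, now carried out on the single-arrival functional equation (\ref{fund0}). The first concrete step is to recast (\ref{fund0}) into the ``$\xi$-isolated'' form of (\ref{r1l}), in which exactly one boundary unknown survives on the left. Here the structural relations $\psi_{1}(z_{1},z_{2})=(1-\xi)z_{1}\tilde{S}(z_{1},z_{2})$, $\psi_{2}(z_{1},z_{2})=-\tfrac{\xi}{1-\xi}\tfrac{\lambda+\mu_{1}^{*}}{\lambda+\mu_{2}^{*}}\psi_{1}(z_{1},z_{2})$, and $\psi_{3}(z_{1},z_{2})=z_{1}(\tilde{T}_{0}(z_{1},z_{2})+\xi\tilde{T}_{1}(z_{1},z_{2}))$ are exactly what I would exploit: dividing through by the common factor $z_{1}$ and collecting every term carrying an explicit $\xi$ on the right, the left-hand side reduces to $\tilde{U}(z_{1},z_{2})\Pi(z_{1},z_{2})-\tilde{S}(z_{1},z_{2})\Pi(z_{1},0)-(\text{a }\tilde{T}_{0}\text{-multiple of})\,\Pi(0,0)$, with the $\Pi(0,z_{2})$ contribution pushed entirely into the $\xi$-weighted remainder. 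Isolating a single boundary function in this way is precisely what allows the recursion to close.

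Next, invoking the analyticity of $\Pi(z_{1},z_{2};\xi)$ in $\xi$ near the origin (Appendix \ref{appe}; cf.\ \cite{walr}), I would insert the ansatz $\Pi(z_{1},z_{2};\xi)=\sum_{m\ge0}V_{m}(z_{1},z_{2})\xi^{m}$ and equate equal powers of $\xi$. This yields, for each $m\ge0$, an identity of the form $\tilde{U}(z_{1},z_{2})V_{m}(z_{1},z_{2})=\tilde{S}(z_{1},z_{2})V_{m}(z_{1},0)+(\tilde{T}_{0}\text{-term})\,V_{m}(0,0)+(\text{data in }V_{m-1})$, the last group being empty for $m=0$. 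The kernel $\tilde{U}(z_{1},z_{2})=z_{2}(\lambda+\mu_{2}^{*})-(\mu_{2}^{*}+\lambda z_{2})\beta^{*}(\lambda-\lambda_{1}z_{1}-\lambda_{2}z_{2})$ must then be analyzed by Rouch\'{e}'s theorem on $|z_{2}|=1$ for fixed $|z_{1}|\le1$: using $|\beta^{*}(\lambda-\lambda_{1}z_{1}-\lambda_{2}z_{2})|\le1$ one shows the term $z_{2}(\lambda+\mu_{2}^{*})$ dominates, so that $\tilde{U}(z_{1},\cdot)$ has a single zero $Y_{0}(z_{1})$ inside the unit disk, with $Y_{0}(1)=1$. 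Because each $V_{m}$ inherits analyticity in the unit bidisk, the right-hand side of the recursion must vanish at $z_{2}=Y_{0}(z_{1})$; solving that equation for $V_{m}(z_{1},0)$ and back-substituting produces the stated closed form (\ref{u0as}), with the lower-order dependence packaged as $\tilde{F}_{m-1}$.

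It remains to fix the single free constant at each order. Evaluating the $m=0$ identity at $z_{1}=z_{2}=1$, where both $\tilde{U}$ and the bracketed numerator vanish, I would take the limit via l'H\^{o}pital (using $Y_{0}'(1)$ and the derivative of $\beta^{*}$) to obtain $V_{0}(0,0)=1-\rho$; the normalization $\Pi(1,1;\xi)=1$ then forces $V_{m}(1,1)=0$ and hence $V_{m}(0,0)=0$ for $m\ge1$, after which (\ref{u0as}) gives all coefficients recursively. The main obstacle is this Rouch\'{e} step together with the $(1,1)$-limit: since $\beta^{*}$ enters nonlinearly, both the comparison inequality on $|z_{2}|=1$ and the verification that $Y_{0}(1)=1$ is the genuine (rather than a spurious) root rest on the stability condition (\ref{stabb}), and the same condition is what keeps the denominator of the limiting ratio nonzero, so that $V_{0}(0,0)=1-\rho$ is well defined.
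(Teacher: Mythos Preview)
Your proposal is correct and follows exactly the route the paper intends: the paper's own proof of this theorem consists solely of the remark that it ``is similar to the one of Theorem \ref{th1} and further details are omitted,'' and your sketch is precisely that replay of Theorem \ref{th1} on the single-arrival functional equation (\ref{fund0}), down to the $\xi$-isolation step, the Rouch\'{e} argument for $Y_{0}(z_{1})$, and the normalization giving $V_{0}(0,0)=1-\rho$ (which here matches Theorem \ref{th1}'s $(1-\rho)/(\bar{g}_{1}+\bar{g}_{2})$ since single arrivals force $\bar{g}_{1}+\bar{g}_{2}=1$).
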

\begin{proof}
The proof is similar to the one of Theorem \ref{th1} and further details are omitted.
\end{proof}
\subsection{Special cases}
Let $\mu_{k}^{*}=\mu^{*}$, and $z_{1}=z_{2}$, i.e., the ordinary single class M/M/1 retrial system under constant retrial policy. Using equation (2.6) from \cite{dimpeis} we obtain
\begin{displaymath}
\begin{array}{l}
H^{(0)}(z_{1},z_{1})=\frac{\mu^{*}(\mu-\lambda z_{1})(1-\frac{\lambda(\lambda+\mu^{*})}{\mu\mu^{*}})}{\mu\mu^{*}-\lambda(\lambda+\mu^{*})},\,H^{(1)}(z_{1},z_{1})=\frac{(\lambda+\mu^{*})H^{(0)}(z_{1},z_{1})-\mu^{*}(1-\frac{\lambda(\lambda+\mu^{*})}{\mu\mu^{*}})}{\mu}.
\end{array}
\end{displaymath}
On the other hand, using our PSA approach, the equations (\ref{u0}), (\ref{u1}) yields $V_{0}^{(n)}(z_{1},z_{2})=H^{(n)}(z_{1},z_{2})$, $V_{m}^{(n)}(z_{1},z_{2})=0$, $m\geq1$, $n=0,1$. This is expected since in such a system the jobs repeat their attempt according to a constant retrial policy with rate $\mu^{*}$ irrespective of $\xi$.

Note also that for $\xi=0$, the orbit queue 2 becomes an ordinary queue in front of the server, which in turns mean that we give absolute \textit{priority} to that queue. In this case, $H^{(0)}(z_{1},z_{2};0)=V_{0}^{(0)}(z_{1},z_{2})$ as given in (\ref{u0}). Indeed, setting $\xi=0$ to (\ref{g1}), and realizing that $B(z_{1},z_{2})=0$ we obtain,
\begin{equation}
\begin{array}{c}
\tilde{K}(z_{1},z_{2})H^{(0)}(z_{1},z_{2})=\tilde{A}(z_{1},z_{2})H^{(0)}(z_{1},0)+\tilde{C}(z_{1},z_{2})(1-\rho),
\end{array}
\label{g11}
\end{equation}
where now $\tilde{K}(z_{1},z_{2})=z_{1}G(z_{1},z_{2})$, $\tilde{A}(z_{1},z_{2})=z_{1}G_{10}()z_{1},z_{2}$, $\tilde{C}(z_{1},z_{2})=z_{1}G_{00}(z_{1},z_{2})$. Taking into account the only root $y=Y_{0}(x)$ of $\tilde{K}(z_{1},z_{2})$ in the unit disk, it is straightforward that $H^{(0)}(z_{1},z_{2})=V_{0}^{(0)}(z_{1},z_{2})$ as given in (\ref{u00}).
\section{Performance metrics}\label{perf}
We will discuss only the case of exponentially distributed service times (Section \ref{exp}), since we believe that it is more interesting, due to the fact that for suc a case the system is described by a 3-dimensional Markov process, i.e., a modulated random walk in the quarter plane. Similar discussion can be done for the case of arbitrarily distributed service times. Clearly,
\begin{equation}
\begin{array}{rl}
E(N_{1})=&\sum_{m=0}^{\infty}\frac{\partial}{\partial z_{1}}[V_{m}^{(0)}(z_{1},z_{2})+V_{m}^{(1)}(z_{1},z_{2})]\xi^{m}|_{z_{1}=z_{2}=1},\\E(N_{2})=&\sum_{m=0}^{\infty}\frac{\partial}{\partial z_{2}}[V_{m}^{(0)}(z_{1},z_{2})+V_{m}^{(1)}(z_{1},z_{2})]\xi^{m}|_{z_{1}=z_{2}=1}.
\end{array}
\label{pss}
\end{equation}
Note that from (\ref{u1}),
\begin{displaymath}
\begin{array}{rl}
\frac{\partial}{\partial z_{1}}V_{m}^{(1)}(z_{1},z_{2})=&\frac{\alpha}{\mu}\frac{\partial}{\partial z_{1}}V_{m}^{(0)}(z_{1},z_{2})+\frac{\mu_{2}^{*}-\mu_{1}^{*}}{\mu}[\frac{\partial}{\partial z_{1}}V_{m-1}^{(0)}(z_{1},0)\mathbf{1}_{\{m\geq1\}}-\frac{\partial}{\partial z_{1}}V_{m}^{(0)}(z_{1},0)],\\
\frac{\partial}{\partial z_{2}}V_{m}^{(1)}(z_{1},z_{2})=&\frac{\alpha}{\mu}\frac{\partial}{\partial z_{2}}V_{m}^{(0)}(z_{1},z_{2})+\frac{\mu_{2}^{*}-\mu_{1}^{*}}{\mu}\frac{\partial}{\partial z_{2}}V_{m-1}^{(0)}(0,z_{2})\mathbf{1}_{\{m\geq1\}},
\end{array}
\end{displaymath}
where $\mathbf{1}_{X}$ the indicator function of the event $X$. Truncation of the power series (\ref{pss}) yields
\begin{equation}
\begin{array}{c}
E(N_{k})=\sum_{m=0}^{M}v_{k,m}\xi^{m}+O(\xi^{M+1}),\,k=1,2,
\end{array}
\label{tru}
\end{equation}
for $v_{1,m}=\frac{\partial}{\partial z_{1}}[V_{m}^{(0)}(z_{1},z_{2})+V_{m}^{(1)}(z_{1},z_{2})]|_{z_{1}=z_{2}=1}$, $v_{2,m}=\frac{\partial}{\partial z_{2}}[V_{m}^{(0)}(z_{1},z_{2})+V_{m}^{(1)}(z_{1},z_{2})]|_{z_{1}=z_{2}=1}$, $m=0,1,...,M$. Numerical results shown that such a truncation yields accurate approximations for small values of $\xi$ (see Figure \ref{dw} (left))\footnote{The problem is symmetric in $\xi$ by constructing PSA in $1-\xi$, instead of $\xi$.}. Alternatively, one can use Pad\'{e} approximants, which replace (\ref{pss}) by a rational functional. In such a case $E(N_{k})$ are replaced by 
$[L/N]_{E(N_{k})}(\xi)=\frac{\sum_{l=0}^{L}v_{k,l}\xi^{l}}{\sum_{n=0}^{N}w_{k,n}\xi^{n}}$
with $N$, $L$, $v_{k,l}$,$w_{k,n}$ appropriately chosen; see \cite{walr} for more details. Numerical results have shown that Pad\'{e} approximants have been proven very accurate for any $\xi\in[0,1]$ (see Figure \ref{dw} (right)).

\section{Numerical illustration}\label{num}
\subsection{The case of single arrivals-exponentially distributed service times}
We now compare the PSA approximations (subsection \ref{exp1}) with the exact results obtained in \cite{dimpeis} in order to investigate the influence of some parameters on the mean queue lengths. We focus only on $E(N_{2})$; similar results stands for $E(N_{1})$.

Figure \ref{dw} (left) ($\lambda_{1}=1$, $\lambda_{2}=2.2$, $\mu_{1}^{*}=8$, $\mu_{2}^{*}=10$, $\mu=5$) depicts the approximations (\ref{tru}) as a function of $\xi$ for varying values of $M$. It can be easily seen that for $\xi$ close to zero, PSA is very accurate, and by increasing the number of terms the accuracy becomes even better. It can be also observed that by increasing the number of terms, we can have larger regions of $\xi$ with good accuracy of the approximations. The above observations are validated by comparing the PSA with the exact results derived in \cite{dimpeis}, Section 4. However, we can also see that truncation approximations deteriorate for $\xi$ away from 0 (or by symmetry, from 1). For such a case, Pad\'{e} approximants shown extremely good performance. Indeed, in Figure \ref{dw} (right) we can observe the high level of accuracy of Pad\'{e} approximants for the same example, and for $N=2$, $L=2M+1-N=7$, with $M=4$. 
\begin{figure}[ht!]
\centering
\begin{minipage}[b]{0.45\linewidth}
\includegraphics[scale=0.46]{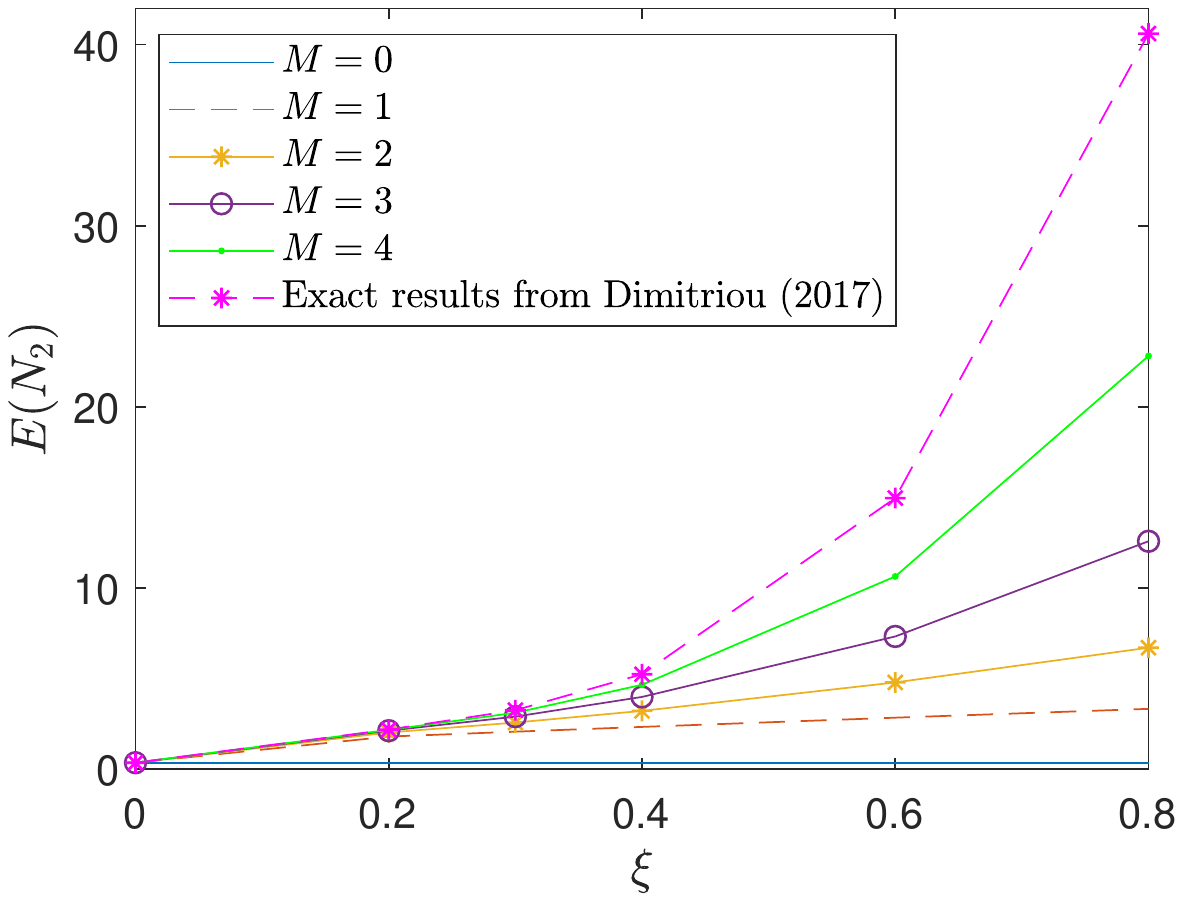}
\end{minipage}
\quad
\begin{minipage}[b]{0.5\linewidth}
\includegraphics[scale=0.54]{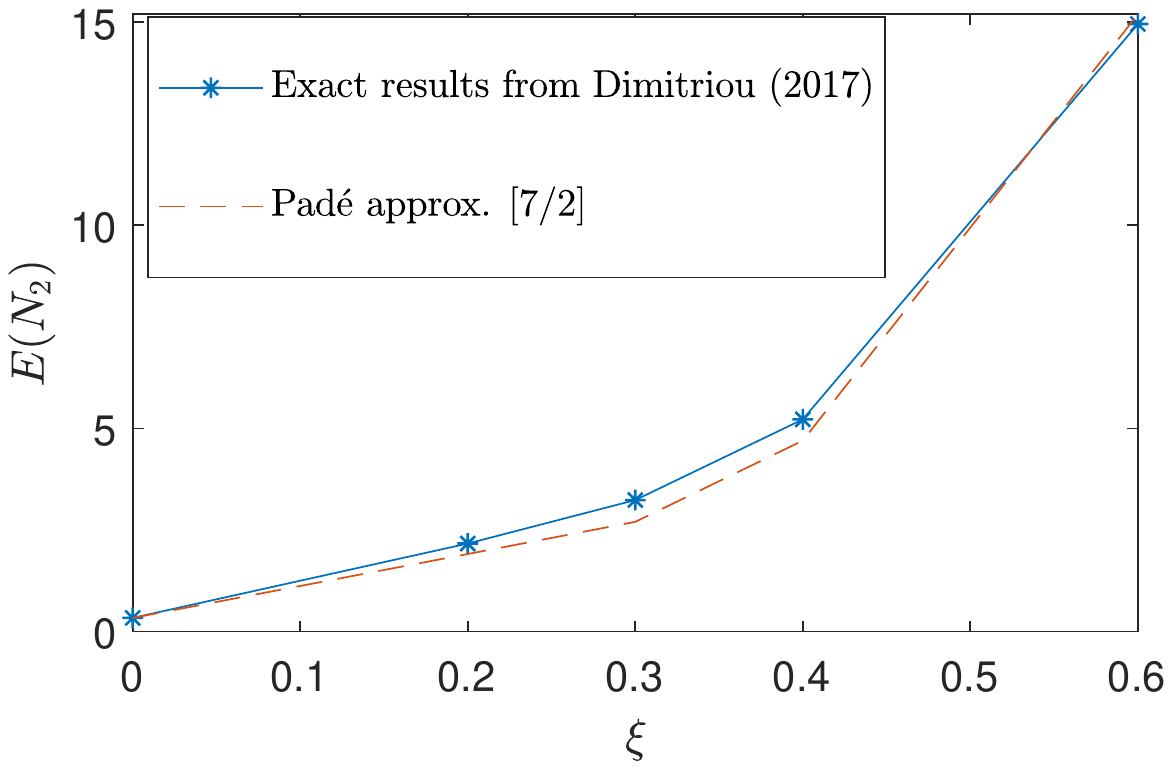}
\end{minipage}\caption{Truncation (left) and Pad\'{e} approx. of $E(N_{2})$ for varying $\xi$.}
\label{dw}
\end{figure}

Finally, Figure \ref{fig1} shows how $E(N_{2})$ increases for increasing values of $\lambda_{2}$ and varying values of $\mu$, for $M=1000$ and $\xi=0.1$ ($\lambda_{1}=1$, $\mu_{1}^{*}=8$, $\mu_{2}^{*}=10$). We can see that for such a value of $\xi$ the increase in $E(N_{2})$ is quite smooth, although the system is close to saturation for $\lambda_{2}$ near to 2.2 ($\rho=0.87$). This is because for such a $\xi$, the system is near to a priority for the orbit queue 2.
\begin{figure}[ht]
\centering
\includegraphics[scale=0.5]{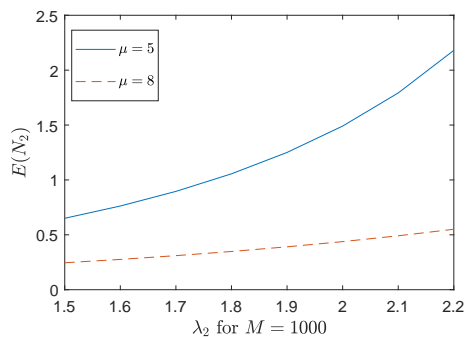}
\caption{Truncation approx. of $E(N_{2})$ for varying $\lambda_{2}$ with $\xi=0.1$.}
\label{fig1}
\end{figure}
\subsection{The case of structured batch arrivals-exponentially distributed service times}
In the following we consider the case of structured batch arrivals (Section \ref{exp}), and assume that if $W$ denotes the batch size and $X_{mi}$, $i = 1,2$, the number of $P_i$ customers in a batch of size $m$, then for $m=1,2,...,$
\begin{displaymath}
\begin{array}{c}
Pr(W=m)=\frac{1}{2^{m}},\,Pr(X_{m1}=k_{1},X_{m2}=k_{2})=\frac{m!}{k_{1}!k_{2}!}u_{1}^{k_{1}}u_{2}^{k_{2}},
\end{array}
\end{displaymath}
with $k_{1}+k_{2}=m$, $u_{1}+u_{2}=1$. Thus,
\begin{displaymath}
\begin{array}{c}
G(z_{1},z_{2})=\sum_{m=1}^{\infty}\frac{1}{2^{m}}(u_{1}z_{1}+u_{2}z_{2})^{m}.
\end{array}
\end{displaymath}
Set $u_{1}=0.6$, $\mu_{1}^{*}=8$, $\mu_{2}^{*}=10$, $\mu=5$, $\xi=0.1$, $p_{1}=0.6$.
\begin{figure}[ht!]
\centering
\includegraphics[scale=0.5]{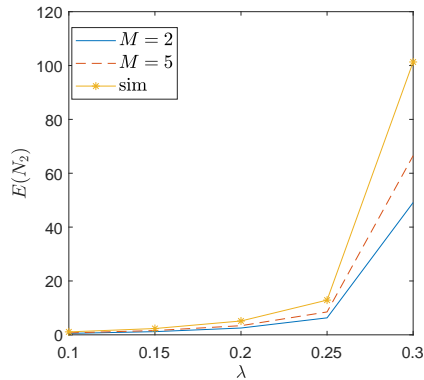}
\caption{Truncation approx. of $E(N_{2})$ for varying $\lambda$ with $\xi=0.1$.}
\label{figa1}
\end{figure}

In Figure \ref{figa1} we compare the PSA approximations to simulation results and investigate the influence of some parameters on the mean queue lengths. We easily observe that for $\xi$ close to zero, PSA is very accurate, and by increasing the number of terms the accuracy becomes even better.

\section{Conclusion}
In this paper we provided power series expansions of the pgfs of the joint stationary orbit queue length distribution in two-class retrial systems with coupled orbit queues. In such a system, an orbit queue re-configures its re-transmission parameter as a function of a parameter $\xi$ ($0\leq\xi\leq 1$), based on the the state of the other orbit. We investigated the structured batch arrival system for both arbitrarily and exponentially distributed system. In such a system jobs arrive in batches of random size, which may include both types of jobs. We also provided the analysis for the case of two independent Poisson streams and single arrivals. Similar analysis can be applied for the case of independent batch arrivals. We distinguished the analysis between arbitrarily and exponentially distributed service times since in the latter case instead of a single functional equation, we have a system of functional equations. Thus, in the later case we obtain power series expansions of the pgfs $H(x,y)=(H^{(0)}(x,y),H^{(1)}(x,y))$ by first obtaining the coefficients of the power terms of $H^{(0)}(x,y)$, iteratively from a constant term, which corresponds to the case $\xi=0$ (which in turn refers to the priority retrial system), and then, obtain those of the power terms of $H^{(1)}(x,y)$, as functions of the coefficients of $H^{(0)}(x,y)$. In the former case, we provided power series expansion of the pgf $\Pi(z_{1},z_{2})$. 

Moreover, we also provided an alternative derivation of $\Pi(z_{1},z_{2})$ by formulating and solving a Riemann boundary value problem. Under such a setting, we can easily observe the advantages of the PSA approach compared with the boundary value approach, at least in terms of computational point of view.

Comparing our results with the exact derivations in \cite{dimpeis}, and with simulations we find accurate approximations by truncating the power series. The major advantage of our approach is that the basic performance metrics are obtained explicitly by the input parameters without additional computational effort.

In a future work, we plan to apply this approach to even general systems to include server failures, and more complex boundary behavior. An additional interesting topic is to apply this approach to a system with more than two orbit queues, in which the theory of boundary value problems cannot be applied. 
\appendix
\section{On the analyticity of $H^{(n)}(z_{1},z_{2};\xi)$ near to $\xi=0$}\label{appe}
We only need to focus on the analyticity of $H^{(0)}(z_{1},z_{2};\xi)$ in a neighborhood of $\xi=0$ by using a variant of the implicit function theorem on the functional equation (\ref{g1}). Then, the analyticity of $H^{(1)}(z_{1},z_{2};\xi)$, follows by the analyticity of $H^{(0)}(z_{1},z_{2};\xi)$ by using (\ref{e111}). For such an approach we use the implicit function theorem for Banach spaces (see Theorem 10.2.3, p. 272 in \cite{ban}, see also \cite{walr}). Define the mapping $f:S\subset\mathbb{C}\times B_{2}\to B_{3}\times\mathbb{C}$,
\begin{displaymath}
\begin{array}{c}
f(\xi,H^{(0)})=[K(z_{1},z_{2},\xi)H^{(0)}(x,y)-K_{1}(z_{1},z_{2},\xi)H^{(0)}(z_{1},z_{2})-K_{2}(z_{1},z_{2},\xi)H^{(0)}(0,z_{2})\\-K_{3}(z_{1},z_{2},\xi)H^{(0)}(0,0), H^{(0)}(1,1)-\frac{\mu-\lambda(2-G(0,1)-G(1,0))}{\mu-\lambda(1-G(0,1)-G(1,0))}],
\end{array}
\end{displaymath}
where $S$ contains the point $(0,V_{0}^{(0)})$, $K$, $K_{1}$, $K_{2}$, $K_{3}$, are as in (\ref{ui1}), $B_2$ be the Banach space comprising all bivariate analytic bounded
functions in $\mathbb{D}^{2}$, with $\mathbb{D}$ the open complex unit disk, and $B_3$ be the Banach
space comprising all trivariate analytic bounded functions in $\mathbb{D}^{3}$ that have a
limit of 0 for the first two arguments going to 1.

Since $K$, $A$, $B$, $C$ are bounded analytic functions in $\mathbb{D}^{3}$, and since $f$ is affine in $H^{(0)}$ and $\xi$, it is easily seen that $f$ is $r$-times continuously differentiable for all $r$. Note also that $f (0,V_{0}^{(0)}) = [0, 0]$.
Then, the (Banach space) derivative of $f$ at the point $(0,V_{0}^{(0)})$ \cite{ban} equals
\begin{displaymath}
\begin{array}{r}
df(0,V_{0}^{(0)})=[K(z_{1},z_{2},0)H^{(0)}(z_{1},z_{2})-K_{1}(z_{1},z_{2},0)H^{(0)}(z_{1},0)\\-K_{3}(z_{1},z_{2},0)H^{(0)}(0,0), H^{(0)}(1,1)].
\end{array}
\end{displaymath}
The next step is to show that this mapping is a homeomorphism. Indeed,
\begin{enumerate}
\item $df (0,V_{0}^{(0)})$ is a continuous mapping for the same reasons
that the mapping $f$ itself is continuous.
\item For given $H^{(0)}_{1}$, $H^{(0)}_{2}$, let $df (0,V_{0}^{(0)})(H^{(0)}_{1})=df (0,V_{0}^{(0)})(H^{(0)}_{2})$. Then,
\begin{displaymath}
\begin{array}{r}
K(z_{1},z_{2},0)(H^{(0)}_{1}(z_{1},z_{2})-H^{(0)}_{2}(z_{1},z_{2}))-K_{1}(z_{1},z_{2},0)(H^{(0)}_{1}(z_{1},0)-H^{(0)}_{2}(z_{1},0))\\-K_{3}(z_{1},z_{2},0)(H^{(0)}_{1}(0,0)-H^{(0)}_{2}(0,0))=0,\\
H^{(0)}_{1}(1,1)-H^{(0)}_{2}(1,1)=0.
\end{array}
\end{displaymath}
or equivalently $f(0,H^{(0)}_{1}-H^{(0)}_{2})=(0,\rho-1)$, which in turn has the zero solution as a unique solution \cite{asm}, and thus $H^{(0)}_{1}=H^{(0)}_{2}$ so that $df (0,V_{0}^{(0)})$ is injective.
\item To show that $df (0,V_{0}^{(0)})$ is surjective, we solve the $df (0,V_{0}^{(0)})(H^{(0)}) = (g,c)$ with $g$ a bivariate analytic bounded function in $\mathbb{D}^{2}$ with limit 0 for its arguments going to 1, and $c$ a complex number. The solution is
\begin{displaymath}
\begin{array}{l}
H^{(0)}(x,y)\\=\frac{g(x,y)K_{1}(z_{1},Y_{0}(z_{1}))-g(x,Y_{0}(x))K_{1}(z_{1},z_{2})+H^{(0)}(0,0)[K_{3}(z_{1},z_{2})K_{1}(z_{1},Y_{0}(z_{1}))-K_{3}(z_{1},Y_{0}(z_{1}))K_{1}(z_{1},z_{2})]}{K(z_{1},z_{2})K_{1}(z_{1},Y_{0}(z_{1}))}.
\end{array}
\end{displaymath}
\item The $H^{(0)}$ obtained previously equals $(df (0,V_{0}^{(0)}))^{-1}$, which is readily seen that it is continuous.
\end{enumerate}
Thus, $H^{(0)}\to df(0,V_{0}^{(0)})(H^{(0)})$ is a linear homeomorphism and using Theorem 10.2.3 in \cite{ban}, $H^{(0)}(z_{1},z_{2};\xi)$ is $r$-times differentiable at $\xi=0$. Having this result, and using (\ref{e111}), $H^{(1)}(z_{1},z_{2};\xi)$ is also $r$-times differentiable at $\xi=0$.

 


%


\end{document}